\numberwithin{equation}{section}
\newtheorem{maintheorem}{Theorem}
\newtheorem{theorem}{Theorem}[section]
\newtheorem*{theorem*}{Theorem}
\newtheorem{lemma}[theorem]{Lemma}
\newtheorem{definition}[theorem]{Definition}
\theoremstyle{definition}{

\newtheorem*{remark*}{Remark}
}
\theoremstyle{remark}{

}
\newcommand{\E}{\mathbb{E}}
\renewcommand{\P}{\mathbb{P}}
\DeclareMathOperator{\var}{Var} \DeclareMathOperator{\Cov}{Cov} 
\renewcommand{\epsilon}{\varepsilon}
\newcommand{\cG}{{\mathcal{G}}}
\newcommand{\cF}{{\mathcal{F}}}
\newcommand{\given}{\, \big| \,}
\newcommand{\one}{\boldsymbol{1}}
\newcommand{\deq}{\stackrel{\scriptscriptstyle\triangle}{=}}
\newcommand{\K}{\mathcal{K}}
\newcommand{\GC}{{\mathcal{C}_1}} 
\newcommand{\tGC}{{\tilde{\mathcal{C}}_1}}
\newcommand{\TC}[1][\mathcal{C}_1]{#1^{(2)}} 
\newcommand{\Bin}{\operatorname{Bin}}
\newcommand{\Exp}{\operatorname{Exp}}
\newcommand{\Geom}{\operatorname{Geom}}
\newcommand{\tx}{\text{\tt{tx}}}
\newcommand{\dir}[1]{\bar{#1}}
\newcommand{\de}{{\dir{e}}}
\DeclareMathOperator{\dist}{dist}
\DeclareMathOperator{\diam}{diam}
\begin{document}

\title[Diameters in supercritical random graphs]{Diameters in supercritical random graphs \\via first passage percolation}

\author{Jian Ding, \thinspace Jeong Han Kim, \thinspace Eyal Lubetzky and Yuval Peres}

\address{Jian Ding\hfill\break
Department of Statistics\\
UC Berkeley\\
Berkeley, CA 94720, USA.}
\email{jding@stat.berkeley.edu}
\urladdr{}

\address{Jeong Han Kim\hfill\break
Department of Mathematics, Yonsei University, Seoul 120-749 Korea, and\hfill\break
National Institute for Mathematical Sciences, Daejeon 305-340, Korea.}
\email{jehkim@yonsei.ac.kr}
\urladdr{}
\thanks{Research of J.H.\ Kim was supported by a Basic Science Research Program through the National Research Foundation of Korea (NRF) funded by the Ministry of Education, Science and Technology (CRI, No. 2008-0054850).}

\address{Eyal Lubetzky\hfill\break
Microsoft Research\\
One Microsoft Way\\
Redmond, WA 98052-6399, USA.}
\email{eyal@microsoft.com}
\urladdr{}

\address{Yuval Peres\hfill\break
Microsoft Research\\
One Microsoft Way\\
Redmond, WA 98052-6399, USA.}
\email{peres@microsoft.com}
\urladdr{}

\begin{abstract}
We study the diameter of $\GC$, the largest component of the Erd\H{o}s-R\'enyi random graph $\cG(n,p)$ in the emerging supercritical phase, i.e., for $p = \frac{1+\epsilon}n$ where $\epsilon^3 n \to \infty$ and $\epsilon=o(1)$.
This parameter was extensively studied for fixed $\epsilon > 0$, yet results for $\epsilon=o(1)$ outside the critical window were only obtained very recently. Prior to this work, Riordan and Wormald gave precise estimates on the diameter, however these did not cover the entire supercritical regime (namely, when $\epsilon^3 n\to\infty$ arbitrarily slowly). {\L}uczak and Seierstad estimated its order throughout this regime, yet their upper and lower bounds differed by a factor of $\frac{1000}7$.

We show that throughout the emerging supercritical phase, i.e.\ for any $\epsilon=o(1)$ with $\epsilon^3n\to\infty$, the diameter of $\GC$ is with high probability asymptotic to $D(\epsilon,n)=(3/\epsilon)\log(\epsilon^3 n)$.
This constitutes the first proof of the asymptotics of the diameter valid throughout this phase.
The proof relies on a recent structure result for the supercritical giant component, which reduces the problem of estimating distances between its vertices to the study of passage times in first-passage percolation.
The main advantage of our method is its flexibility. It also implies that in the emerging supercritical phase the diameter of the 2-core of $\GC$ is w.h.p.\ asymptotic to $\frac23 D(\epsilon,n)$, and the maximal distance in $\GC$ between any pair of kernel vertices is w.h.p.\ asymptotic to $\frac{5}9D(\epsilon,n)$.

\end{abstract}

\maketitle

\vspace{-0.75cm}

\section{Introduction}

The Erd\H{o}s-R\'enyi random graph $\cG(n,p)$ is perhaps the most fundamental random graph model, and its rich behavior has been studied in numerous papers since its introduction in 1959 \cite{ER59}. One of the most famous phenomena exhibited by this model is the \emph{double jump} at the critical $p=1/n$. As discovered by Erd\H{o}s and R\'enyi in their celebrated papers from the 1960's, for $p=c/n$ with $c$ fixed, the largest component $\GC$ has size $O(\log n)$ with high probability (w.h.p.) when $c<1$, its size is w.h.p.\ linear in $n$ for $c>1$, and for $c=1$ its size has order $n^{2/3}$ (the latter was fully established only decades later by Bollob\'as \cite{Bollobas84} and {\L}uczak \cite{Luczak90}). Furthermore, Bollob\'as found that the critical behavior extends throughout the regime where $p=(1\pm\epsilon)/n$ for $\epsilon = O(n^{-1/3})$, known as the \emph{critical window} (or scaling window).

Despite the intensive study of this model, one of its key features --- the diameter of the largest component --- remained unknown in a regime just beyond criticality, namely for $p=(1+\epsilon)/n$ where $\epsilon=o(1)$ and $\epsilon^3 n\to\infty$ arbitrarily slowly.
Our main results determine the asymptotic behavior of the diameter throughout this regime, as well as the diameter of the 2-core and the maximal distance in it between kernel vertices (see definitions below).

\begin{maintheorem}\label{mainthm-1}
Let $\GC$ be the largest component of the random graph $\cG(n,p)$ with $p = (1 + \epsilon)/n$, where $\epsilon^3 n\to \infty$ and $\epsilon = o(1)$. Then w.h.p., \[\diam(\GC) = \frac{3+o(1)}\epsilon\log(\epsilon^3 n)\,.\]
\end{maintheorem}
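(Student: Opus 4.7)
The strategy is to exploit the structural decomposition of the supercritical giant component alluded to in the introduction: $\GC$ is tightly coupled to a random graph built in three layers: (i) a \emph{kernel} $\K$, a nearly $3$-regular random multigraph on $K = \Theta(\epsilon^3 n)$ vertices; (ii) the $2$-core, obtained from $\K$ by subdividing each kernel-edge into a path whose length is approximately $\Geom(\epsilon)$; and (iii) $\GC$ itself, obtained by attaching to each of the $N = \Theta(\epsilon^2 n)$ vertices of the $2$-core an independent subcritical Galton-Watson tree with $\Po(1-\epsilon)$ offspring. Under this decomposition, for any $u,v \in \GC$ lying in distinct mantle-trees one has $d_{\GC}(u,v) = h(u) + d_{\text{2-core}}(r(u), r(v)) + h(v)$, where $r(u)$ is the $2$-core root of the tree containing $u$ and $h(u)$ is its depth. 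Consequently
\begin{equation*}
\diam(\GC) \;=\; \max_{w_1, w_2 \in \text{2-core}} \bigl[\, H(w_1) + d_{\text{2-core}}(w_1, w_2) + H(w_2)\, \bigr],
\end{equation*}
where $H(w)$ is the maximal depth among vertices in the tree attached at $w$.

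The core step is to analyse $d_{\text{2-core}}(w_1, w_2)$ between kernel vertices, which equals the first-passage percolation (FPP) distance in $\K$ with iid $\Geom(\epsilon)$ edge weights. I would couple the growing FPP ball around a kernel vertex with a continuous-time Bienaym\'e branching process in which each particle produces $2$ offspring at independent $\Exp(\epsilon)$ times (reflecting the $d-1 = 2$ unused half-edges at each newly discovered kernel vertex). A Malthusian-parameter computation gives growth rate $\alpha = \epsilon$, so the weighted ball of radius $t$ contains $W e^{\epsilon t + o(\epsilon t)}$ vertices for a positive martingale limit $W$. Meeting two such balls ``in the middle'' yields a typical kernel-to-kernel distance of $(1+o(1))\epsilon^{-1}\log K = (1+o(1)) D/3$, while a refined large-deviation analysis of the pair $(W_1, W_2)$ yields the full upper-tail estimate on $d_{\K}(w_1, w_2)$ used below.

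For the tree mantle, Kolmogorov's classical estimate $\P(\text{height}(\mathcal{T}) \geq h) \sim 2\epsilon(1-\epsilon)^h$ for subcritical $\Po(1-\epsilon)$ Galton-Watson trees, together with the near-independence of the $N$ trees, yields $\max_w H(w) = (1+o(1))\epsilon^{-1}\log(2\epsilon N) = (1+o(1)) D/3$ w.h.p. For the lower bound $\diam(\GC) \geq (1-o(1))D$, a second-moment calculation produces polynomially many $w$ with $H(w) \geq (1-o(1))D/3$ (indeed, the expected count is $(\epsilon^3 n)^{\Theta(1)}$); among these one finds a pair $w_1, w_2$ whose FPP distance in $\K$ is at least the typical $(1-o(1))D/3$, so that $H(w_1) + d_{\K}(w_1, w_2) + H(w_2) \geq (1-o(1))D$. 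For the matching upper bound, a union bound over triples $(a, b, \ell)$ of thresholds with $a + b + \ell \geq (1+\eta)D$ shows, via the tail estimates on $H$ and on $d_{\K}$ derived above and the independence of the tree-heights across distinct $2$-core vertices, that the expected number of pairs $(w_1, w_2)$ witnessing $\diam(\GC) \geq (1+\eta)D$ is $o(1)$ for every $\eta > 0$.

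The principal obstacle is the FPP analysis on the kernel: establishing tail estimates on $d_{\K}(w_1, w_2)$ sharp enough, and uniform enough over all pairs, to close the upper bound. This requires extending the branching-process coupling well past the $\sqrt{K}$ regime where collisions between the two growing balls become non-negligible, quantitatively controlling the behaviour of the atypical pairs whose martingale limits $W_1, W_2$ are abnormally small, and absorbing the small discrepancies between $\K$ and a true $3$-regular configuration model as well as between the attached mantle-trees and genuine independent $\Po(1-\epsilon)$ Galton-Watson processes.
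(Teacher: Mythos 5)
Your overall strategy is the paper's: invoke the kernel/2-core/PGW-tree decomposition, reduce 2-core distances to first-passage percolation on the (nearly) 3-regular kernel, estimate the tree-height tails, and combine a second-moment lower bound with tail-plus-union-bound upper bound; the lower-bound argument you sketch (polynomially many tall trees attached at essentially uniform 2-core locations, plus the typical kernel distance) is exactly the paper's. However, two points in your upper bound are genuine gaps rather than routine details. First, the sharp, uniform upper-tail estimates for FPP distances --- which you yourself flag as the principal obstacle --- are the real content of the proof and are not supplied. The paper proves them not via a CMJ/martingale-limit coupling but by running the configuration-model exploration jointly with the exponential clocks and bounding the Laplace transform of the time for the ball to accumulate roughly $\sqrt{N\log N}$ vertices, splitting on the tree excess of the explored set; this gives a tail of the form $c\mathrm{e}^{-3\epsilon s}+\frac{c}{N}\mathrm{e}^{-\epsilon s}+O(N^{-3/2})$ beyond half the typical distance, together with the w.h.p.\ bound $\dist(u,v)\le T_u+T_v$ for all pairs. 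Your route could plausibly reproduce this (small $W$ started from three lines gives the $\mathrm{e}^{-3\epsilon s}$ term, an early cycle the $\frac1N\mathrm{e}^{-\epsilon s}$ term), but nothing of it is carried out, and the uniformity over all pairs past the $\sqrt{N}$ collision regime is precisely where the work lies.

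Second, and more concretely: your displayed identity maximizes over all $\Theta(\epsilon^2 n)$ vertices of the 2-core, yet your FPP analysis and your union bound over pairs are formulated only for kernel vertices. The diameter is attained by tall trees hanging at \emph{typical} 2-core vertices, almost all of which lie in the interiors of the geometric paths, so for each such vertex you must also control the stretch along its own path, whose half-length has only a rate-$2\epsilon$ (not $3\epsilon$) exponential tail. The paper handles this with the metric-graph quantity $T_{\de}(q)+\tfrac12 w(\de)$ attached to a directed kernel edge (with a separate treatment of ``rare'' edges), which covers every path-interior attachment point while keeping the union bound over only $O(\epsilon^3 n)$ directed edges; your outline needs an analogous device, and ``absorbing small discrepancies'' does not cover it. The same remark applies, more mildly, to the general regime $\epsilon=o(1)$, where the kernel has truncated-Poisson degrees rather than being 3-regular and the paper spends a separate section re-verifying the exploration estimates (the richer degrees help the upper bound but must be controlled for the tree excess and the lower bound).
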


A key advantage of our method over alternative approaches is its flexibility, demonstrated by the following theorem.
Recall that the \emph{2-core} of $\GC$, denoted by $\TC$, is the maximal subgraph of $\GC$ where every vertex has degree at least degree 2. The \emph{kernel} $\K$ is the multigraph obtained from $\TC$ by contracting every 2-path (a path where all interior vertices have degree 2) into an edge.

\begin{maintheorem}\label{mainthm-2}
Let $\TC$ be the $2$-core of the largest component $\GC$ of $\cG(n,p)$ with $p = (1 + \epsilon)/n$, where $\epsilon^3 n\to \infty$ and $\epsilon = o(1)$. Let $\K$ denote the kernel of $\TC$. Then w.h.p.,
\begin{align}
\diam(\TC) &= \frac{2+o(1)}\epsilon\log(\epsilon^3 n)\,,\label{eq-diam-2-core}\\
  \max_{u,v\in \K} \dist_{\GC}(u,v) &= \frac{5/3+o(1)}\epsilon\log (\epsilon^3 n)\label{eq-diam-kernel}\,.
\end{align}
\end{maintheorem}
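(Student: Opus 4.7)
The plan is to leverage the structural description of $\GC$ developed earlier in this paper, which (up to asymptotically negligible perturbation) represents the supercritical giant as the random graph obtained by three independent steps: sample the \emph{kernel} $\K$, an asymptotically 3-regular random multigraph on $N=(\tfrac23+o(1))\epsilon^3n$ vertices; replace each of the $\sim\epsilon^3n$ edges of $\K$ by an independent path of $\Geom(\epsilon)$ length to obtain the 2-core $\TC$; and attach an independent subcritical Galton--Watson tree at every vertex of $\TC$ to recover $\GC\setminus\TC$. Since no geodesic in $\GC$ between two 2-core vertices ever leaves the 2-core (the hanging trees are dead ends), $\dist_\GC(u,v)=\dist_\TC(u,v)$ for all $u,v\in\TC$. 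Both claims in Theorem~\ref{mainthm-2} therefore reduce to distance estimates in a first-passage percolation (FPP) process on $\K$ with i.i.d.\ $\Geom(\epsilon)$ edge weights, together with an additional ``midpoint extension'' in the 2-core case.

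Set $D:=\epsilon^{-1}\log(\epsilon^3n)$. Since $\K$ is locally 3-regular and tree-like up to the girth radius $\sim\tfrac12\log_2 N$, the FPP exploration from a kernel vertex is, in that regime, a branching random walk on the binary tree with i.i.d.\ $\Geom(\epsilon)$ displacements. Solving $2\,\E[e^{-\alpha X}]=1$ for $X\sim\Geom(\epsilon)$ yields Malthusian parameter $\alpha=\log(1+\epsilon)=(1+o(1))\epsilon$, and the intrinsic BRW martingale converges to a positive limit $W_v$ with $|B_\TC(v,r)\cap\K|=W_v e^{\alpha r}(1+o(1))$ throughout the tree regime. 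Beyond the girth scale, a configuration-model sprinkling argument continues the growth at rate $\alpha$ until the ball exhausts $\K$, so that the FPP ball of radius $r$ around $v$ first covers $\K$ at $\alpha r\sim\log(N/W_v)$.

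For \eqref{eq-diam-kernel} I would grow simultaneous FPP balls from two kernel vertices $u,v$ and use configuration-model edge matching to show that they intersect once $W_u W_v\, e^{\alpha(r_u+r_v)}\gtrsim N$, i.e., $r_u+r_v\geq\alpha^{-1}[\log N-\log(W_u W_v)]+O(\alpha^{-1}\log\log N)$. The uniform upper bound $\dist_\TC(u,v)\le(5/3+o(1))D$ follows by a union bound over the $\binom{N}{2}$ pairs once one has sharp left-tail bounds on the BRW martingale limit showing that $\max_{u\ne v}(-\log W_u-\log W_v)\le(\tfrac23+o(1))\log N$ w.h.p.; the matching lower bound uses a second-moment argument to exhibit two kernel vertices whose martingale limits are simultaneously of size $N^{-1/3+o(1)}$ and whose FPP balls of radius $(\tfrac56-o(1))D$ remain disjoint with positive probability. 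For \eqref{eq-diam-2-core}, the longest subdivided-edge length is $L^\star=(1+o(1))D$ by the standard extreme-value estimate on the max of $\sim\epsilon^3n$ i.i.d.\ $\Geom(\epsilon)$ variables; the lower bound takes $x,y$ at the midpoints of two long edges whose four kernel endpoints, by independence of edge lengths from the kernel structure, behave as typical kernel vertices and lie at pairwise FPP distance $(1+o(1))D$, so that $\dist_\TC(x,y)\ge\tfrac12 L^\star+(1+o(1))D+\tfrac12 L^\star=(2-o(1))D$. The matching upper bound $\operatorname{ecc}_\TC(x)\le(2+o(1))D$ for every $x\in\TC$ decomposes $x$'s eccentricity as $\tfrac12\ell_{e_x}+\max_{y\in\TC}\min_{i}\dist_\TC(u_i,y)$ where $u_1,u_2$ are $x$'s two kernel endpoints, bounding the first summand by $\tfrac12 L^\star$ and the second by the 2-source FPP covering time of $\TC$ from $\{u_1,u_2\}$, which a BRW-from-two-sources analysis shows to be $(\tfrac32+o(1))D$.

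The main technical obstacle is the sharp constant $5/3$ in \eqref{eq-diam-kernel}. Classical FPP gives only the typical distance $D$; extracting the additional $(2/3)D$ requires joint left-tail control of the BRW martingale limit at two distant vertices, combined with a configuration-model sprinkling argument past the girth scale of $\K$ and a quantitative accounting for the deviations of the random kernel from perfect 3-regularity (which perturb both the branching factor $2$ and the Malthusian parameter $\alpha$ at subleading order). A secondary difficulty in \eqref{eq-diam-2-core} is verifying the decorrelation between long subdivided edges and extremal kernel-kernel pairs: this is crucial for ensuring that the diameter-realizing pair uses midpoints of long edges whose kernel endpoints are at \emph{typical} (not extremal) FPP distance in the kernel, which in turn follows from the independence of the edge-length sequence from the kernel structure in the paper's structural theorem.
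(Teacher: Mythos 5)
Your overall strategy coincides with the paper's: reduce via the structure theorem to first-passage percolation on the kernel with i.i.d.\ $\Geom(\epsilon)$ edge lengths, obtain the constant $5/3$ from two kernel vertices whose incident edges are all atypically long (an initial ``delay'' of $(\tfrac13+o(1))\epsilon^{-1}\log(\epsilon^3n)$ each, on top of the typical distance $\epsilon^{-1}\log(\epsilon^3n)$), and obtain the constant $2$ from midpoints of two $\Geom(\epsilon)$ paths of near-maximal length $(1+o(1))\epsilon^{-1}\log(\epsilon^3n)$. Your constants are consistent with the paper's Theorems on weighted random regular graphs and their metric graphs. However, there is a genuine gap: the quantitative core of the argument is asserted, not proved, and you explicitly defer it as the ``main technical obstacle.'' For the upper bound in \eqref{eq-diam-kernel} a union bound over all $\binom N2$ pairs requires, uniformly over every kernel vertex $u$, a tail estimate at probability scale $o(1/N)$ for the time at which the FPP ball from $u$ reaches size about $\sqrt{N\log N}$, with the correct decay rate (rate $3$ for tree-like starts, rate $1$ on the $O(1/N)$-probability event of a short cycle near $u$), together with a ball-intersection statement. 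The heuristic $|B(v,r)\cap\K|=W_ve^{\alpha r}(1+o(1))$ and left-tail bounds on $W$ do not by themselves deliver this uniformity: one must control the fluctuations of the growth \emph{after} the initial delay simultaneously for all vertices, and this is exactly what the paper's continuous-time configuration-model exploration, Laplace-transform bounds on $T_u(q)$ with the tree-excess case analysis, and the good-vertex/good-edge second-moment lower bounds provide. Your lower-bound sketch also concludes only ``with positive probability,'' whereas a w.h.p.\ statement is needed.

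A second substantive omission is the general regime $\epsilon=o(1)$: outside $\epsilon=o(n^{-1/4})$ the kernel is not an (asymptotically) $3$-regular graph but has a truncated-Poisson degree sequence with maximum degree of order $\log N/\log\log N$, and the paper devotes a separate section to showing that high-degree vertices only help the upper bounds (via revised tree-excess estimates) while the lower-bound exploration still satisfies $S_t\le(1+o(1))t$ by a stochastic-domination/Laplace-transform argument; your one-clause remark that non-regularity perturbs the branching factor ``at subleading order'' does not address this. Two smaller slips: the kernel has $N=(\tfrac43+o(1))\epsilon^3n$ vertices, not $(\tfrac23+o(1))\epsilon^3n$ (harmless since only $\log N$ matters); and your eccentricity decomposition $\dist(x,y)\le\tfrac12\ell_{e_x}+\min_i\dist(u_i,y)$ is valid only when $x$ is the midpoint of its edge --- in general you may pair the nearer endpoint's along-edge distance only with that same endpoint's kernel distance, which is precisely why the paper works with the directed-edge quantity $T_{\de}(q)+\tfrac12w(\de)$ and the bound \eqref{eq-metric-dist(x,y)}.
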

(In the above statements, the term w.h.p.\ denotes a probability tending to $1$ as $n\to\infty$, while the $o(1)$-terms denote functions going to $0$ as $n\to\infty$.)

Theorem~\ref{mainthm-1} completes a long list of studies of the diameter in sparse Erd\H{os}-R\'enyi random graphs.
In the subcritical case, $p=(1-\epsilon)/n$ where for $\epsilon\to0$ and $\epsilon^3n\to\infty$, {\L}uczak \cite{Luczak97} obtained a precise estimate of $\log_{1/(1-\epsilon)}(2\epsilon^3 n) + O(1/\epsilon)$ for the largest diameter of a component (including the limiting distribution of the additive $O(1/\epsilon)$-term). In the critical window, it was
shown in \cite{NP} that the diameter of $\GC$ has order $n^{1/3}$. See also the recent work~\cite{ABG} studying the limiting distribution within the critical window.

However, analyzing the diameter in the supercritical case is considerably more delicate. The asymptotics of this parameter in the fully-supercritical regime, $p=(1+\epsilon)/n$ where $\epsilon > 0$ is fixed, were obtained in \cite{FeRa} (see also \cites{CL,BJR}).
Results for the regime $\epsilon=o(1)$ were only obtained fairly recently. In 2008, Riordan and Wormald~\cite{RW1} proved accurate estimates of the diameter for most of this regime, but did not cover the entire range where the random graph emerges from the critical window (i.e., $\epsilon^3 n\to\infty$ arbitrarily slowly). Note that, while the gap that remained was extremely small, the authors stated that ``our method does seem to need some concrete lower bound on $\epsilon^3 n$ tending to infinity as a function of $n$''. {\L}uczak and Seierstad \cite{LS} gave estimates for the diameter that do apply to the entire supercritical regime, yet their upper and lower bounds differ by a factor of $\frac{1000}7$.

\begin{remark*}
Following the completion of this work, Riordan and Wormald~\cite{RW2} managed to extend their analysis to the entire supercritical regime,
thus obtaining a version of Theorem~\ref{mainthm-1} with more accurate error-term estimates.
Referring to the present work, they stated in~\cite{RW2} that ``Seeing this paper stimulated us
to remove the unnecessary restriction on $\epsilon$''. We emphasize that the proofs in~\cites{RW1,RW2} rely on branching process analysis and are quite different from our methods.
\end{remark*}

\begin{remark*}
By our results and the duality of the supercritical and subcritical regimes, it follows that in the setting of Theorem~\ref{mainthm-1}, the diameter of $\GC$ is w.h.p.\ the largest diameter of a component -- larger by an asymptotic factor of $3$ compared to the largest diameter of any other component. Compare this to the subcritical case $p=(1-\epsilon)/n$, where the largest component $\GC$ has diameter of order $(1/\epsilon)\sqrt{\log(\epsilon^3 n)}$ w.h.p.\ (since $\GC$ is w.h.p.\ a tree whose size has order $\epsilon^{-2}\log(\epsilon^3 n)$, and conditional on this it is uniformly distributed on all trees of this size).
\end{remark*}

In order to establish Theorems~\ref{mainthm-1} and \ref{mainthm-2}, we apply a recent structure result proved in a companion paper \cite{DKLP}. This result translates the supercritical giant component into a contiguous tractable model constructed in 3 steps as follows (see Theorems~\ref{thm-struct} and \ref{thm-struct-gen} for the precise formulation):
 \begin{enumerate}[\indent 1.]
   \item Select a multigraph $\K$ uniformly among all graphs with a prescribed size and degree sequence, where almost all degrees are $3$.
   \item Replace the edges of $\K$ by paths of i.i.d.\ geometric lengths.
   \item Attach a Poisson-Galton-Watson tree to each vertex.
 \end{enumerate}
Note that Step~1 above constructs the kernel, Step~2 gives the 2-core and Step~3 produces the entire giant component.

Using this tool, the problem of estimating distances in $\GC$ is reduced to the study of passage times in first-passage percolation
(see, e.g., \cite{Kesten} for further information on this thoroughly studied topic). For instance, one may readily deduce from known results \cite{BHvdH} on first-passage percolation that the typical distance between kernel vertices in the 2-core is asymptotically
  $(1/\epsilon)\log(\epsilon^3n)$ (in fact, the limiting distribution of this typical distance is completely determined in~\cite{BHvdH}); see \cite{DKLP}*{Corollary 2}.

However, in weighted random graphs, maximal distances exhibit a behavior different from typical distances.
In order to prove our main results, we establish sharp large deviation estimates for these distance variables.

The rest of this paper is organized as follows.
Section~\ref{sec:perlims} contains a few preliminary facts required for the proofs. In Section~\ref{sec:rand-reg} we analyze typical and maximal distances in weighted random regular graphs, as well as in metric graphs (obtained by replacing each weighted edge by a line segment with the corresponding length). Section~\ref{sec:young-giant} contains the proofs of the main theorems in the special case of $\epsilon=o(n^{-1/4})$, where the description of the giant component has a particularly elegant form (Theorem~\ref{thm-struct}).
We extend these results to the general case of any $\epsilon=o(1)$ in Section~\ref{sec:giant}.

\section{Preliminaries and notation}\label{sec:perlims}

A random $d$-regular $G\sim\cG(n,d)$ is a graph uniformly chosen among all graphs on $n$ vertices
in which every vertex has degree $d$. One of the main tools for sampling from this distribution, as well as analyzing the behavior its typical elements, is the \emph{configuration model}, introduced by Bollob\'as \cite{Bollobas1} (see \cites{Bollobas2,JLR,Wormald99}).

To construct a graph using this method, associate each of the $n$ vertices with $d$ distinct \emph{half-edges}, and select a uniform perfect matching on these half-edges. The resulting (multi)graph is obtained by contracting each $d$ half-edges into their corresponding vertex (possibly introducing multiple edges or self-loops). Crucially, given that the graph produced is simple, it is uniform over $\cG(n,d)$, and for $d$ fixed, the probability of this event is bounded away from $0$. Hence, events that hold w.h.p.\ for the graph obtained via the configuration model also hold w.h.p.\ for $G \sim \cG(n,d)$.

One particularly useful property of the configuration model is that it allows one to construct the graph gradually, exposing the edges of the perfect matching one at a time. This way, each additional edge is uniformly distributed among all possible edges on the remaining (unmatched) half-edges.

The distance between two vertices $u,v$ in an unweighted (undirected) graph $G$, denoted by $\dist_G(u,v)$, is the number of edges in the shortest path connecting these two vertices. We will use the abbreviation $\dist(u,v)$ when there is no danger of confusion.
If $G$ has non-negative weights on its edges, the \emph{length} of a path is replaced by its weight (the sum of weights along its edges), and $\dist(u,v)$ is analogously defined as the weight of the shortest (least heavy) path between $u,v$.

The diameter of a graph $G$, denoted by $\diam(G)$, is the maximum of $\dist(u,v)$ over all possible vertices $u,v$.

It is well-known (and easy to show) that $\cG(n,d)$ is locally tree-like around a typical vertex. To formalize such statements, we use the following notion.
The \emph{tree excess} of a connected set $S$, denoted by $\tx(S)$, is the maximum number of edges that can be deleted from the induced subgraph on $S$ while still keeping it connected (i.e., the number of extra edges in that induced subgraph beyond $|S|-1$).

\section{Random regular graphs with exponential weights}\label{sec:rand-reg}

\subsection{Diameters of weighted graphs}
In this section, we consider a random regular graph for $d \geq 3$ fixed, with i.i.d.\ rate $1$ exponential variables
on its edges. In fact, here and throughout the paper, we will consider a random $d$-regular \emph{multigraph} generated via the configuration model (which will prove useful in capturing the geometry of the kernel of the largest component in $\cG(n,p)$).
Our goal is both to obtain the asymptotic diameter in this graph, and crucially, also establish its decay rate
(see Eq.~\eqref{eq-diam-decay}).

\begin{theorem}\label{thm-diameter}
Fix $d \geq 3$ and let $G \sim \cG(n,d)$ be a random $d$-regular multigraph with $n$ vertices and i.i.d.\ rate $1$ exponential variables on its edges. Then w.h.p., $\diam(G) = \big(\tfrac{1}{d-2}+\tfrac2{d}\big)\log n + O(\log\log n)$.
\end{theorem}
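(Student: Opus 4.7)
The plan is to analyze first-passage percolation (FPP) from every vertex of $G$, exploiting the fact that $G \sim \cG(n,d)$ is locally tree-like w.h.p.\ (tree excess $0$ in balls up to size $\sqrt{n}$), so the FPP growth $B_v(t)$ around a vertex $v$ is well-coupled, up to this scale, with a continuous-time branching process (CTBP) in which the root has $d$ offspring and every other individual has $d-1$ offspring, each at i.i.d.\ $\Exp(1)$ delays. The Malthusian equation $(d-1)/(\alpha+1)=1$ yields the growth rate $\alpha=d-2$, and the Malthusian martingale $M_v = \lim_{t\to\infty} e^{-(d-2)t}|B_v(t)|$ is a.s.\ positive and satisfies the fixed-point identity $M_v \stackrel{d}{=} \sum_{i=1}^{d} e^{-(d-2)E_i} W_i$ with i.i.d.\ $E_i\sim \Exp(1)$ and i.i.d.\ copies $W_i$ of the non-root martingale $W \stackrel{d}{=} \sum_{j=1}^{d-1} e^{-(d-2)E_j} W_j$. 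Via the approximation $|B_v(t)| \approx e^{(d-2)t} M_v$, estimating the diameter reduces to controlling $\min_v M_v$.

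The key tail estimate is $\P(M_v \leq \epsilon) \asymp \epsilon^{d/(d-2)}$ as $\epsilon\to0$: the dominant mechanism is that all $d$ initial edge weights at $v$ exceed $t=(\log\epsilon^{-1})/(d-2)$, an event of probability $e^{-dt}=\epsilon^{d/(d-2)}$ (the parallel computation gives $\P(W\leq\epsilon)\asymp\epsilon^{(d-1)/(d-2)}$). A union bound then yields $-\log\min_v M_v \leq \tfrac{d-2}{d}\log n + O(\log\log n)$ w.h.p., while a first- and second-moment analysis produces two vertices $u,v$ with $M_u, M_v \leq n^{-(d-2)/d}\cdot(\log n)^{O(1)}$ whose $r$-neighborhoods are disjoint for $r = \tfrac{\log n}{2\log(d-1)}$.

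The upper bound on $\diam(G)$ now follows from a birthday-style meeting argument: the FPP balls $B_u(t_u)$ and $B_v(t_v)$ must intersect once $|B_u(t_u)|\cdot|B_v(t_v)|\gtrsim n$ (via a collision computation on the configuration model), yielding $\dist_G(u,v) \leq (\log n - \log(M_uM_v))/(d-2) + O(\log\log n)$. Substituting the uniform bound on $-\log M_v$ gives $\diam(G)\leq \bigl(\tfrac{1}{d-2}+\tfrac{2}{d}\bigr)\log n + O(\log\log n)$. For the matching lower bound, take the extremal pair $u,v$ from the previous step: while $t_u+t_v < (\log n - \log(M_uM_v))/(d-2) - O(\log\log n)$, both balls have size at most $n^{1/2-\delta}$, so they lie within the disjoint $r$-neighborhoods of $u$ and $v$ and are thus disjoint, forcing $\dist_G(u,v) \geq \bigl(\tfrac{1}{d-2}+\tfrac{2}{d}\bigr)\log n - O(\log\log n)$.

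The main obstacle is the lower bound: one must obtain uniform concentration of $|B_v(t)|/(e^{(d-2)t}M_v)$ in $v$ deep into the tail regime $M_v \sim n^{-(d-2)/d}$, and run a second-moment computation showing that two such extreme values coexist on vertices with disjoint $r$-neighborhoods (so that their martingales are effectively independent). Both tasks will be handled by the edge-by-edge configuration-model exploration, which exposes the local structure around $u$ and $v$ independently as long as their revealed neighborhoods remain disjoint, together with standard large-deviation estimates for the approximating CTBP.
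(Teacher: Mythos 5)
Your strategy is, at its core, the same as the paper's: growth at Malthusian rate $d-2$ until two balls of size about $\sqrt{n\log n}$ must meet (giving the $\tfrac{1}{d-2}\log n$ term), plus a worst-case per-endpoint delay of $\tfrac1d\log n$ coming from all $d$ incident edges being atypically heavy -- your tail $\P(M_v\le\epsilon)\asymp\epsilon^{d/(d-2)}$ encodes exactly this mechanism, and your exponents and final constants check out. The packaging differs: the paper never introduces the martingale limit $M_v$; it works directly with the hitting time $T_u(q)$ of size $q=2\sqrt{dn\log n}$, proves the uniform tail bound $\P\big(T_u(q)\ge\tfrac{\log n}{2(d-2)}+O(\log\log n)+\ell\big)\le c\,e^{-d\ell}+\tfrac{c}{n}e^{-\ell}+2n^{-3/2}$ by stochastic domination with independent exponentials and a Laplace-transform computation (with explicit control of the tree excess), and for the lower bound it replaces your non-local event $\{M_v\le\epsilon\}$ by the purely local event ``all $d$ edges at $v$ have weight at least $\tfrac1d\log n-\tfrac1d\log\log n$'', which makes the second moment trivial and decouples cleanly from the disjointness of balls grown from the neighbours. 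Your route is viable, but what you defer to ``standard large-deviation estimates'' is precisely the technical heart of the matter: (a) you need $|B_u(t)|\approx e^{(d-2)t}M_u$ with only $O(\log\log n)$ time-error, uniformly over all $n$ vertices and deep in the lower tail of $M_u$, proved on the graph rather than on the idealized CTBP; in particular the $\Theta(1)$ vertices whose ball acquires a self-loop or multi-edge early have a strictly heavier delay tail than $\epsilon^{d/(d-2)}$ and must be union-bounded separately (the paper's Case~2, responsible for the $\tfrac{c}{n}e^{-\ell}$ term); (b) for your second moment, $\{M_v\le\epsilon\}$ must first be localized to a finite-ball proxy (a delayed hitting time at polylogarithmic scale) before independence over disjoint explorations can be invoked.

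Two stated facts need repair. First, ``tree excess $0$ in balls up to size $\sqrt n$'' is false uniformly in $v$ (the expected excess at size $\sqrt n$ is already $\Theta(1)$ per vertex); the paper only assumes excess $o(k)$ at size $k$ and shows this perturbs the growth rates negligibly, and you should do the same. Second, the lower-bound geometry is off: for $r=\tfrac{\log n}{2\log(d-1)}$ the graph-distance $r$-neighbourhoods of a random pair intersect with probability bounded away from $0$, FPP balls of size near $\sqrt n$ need not be contained in such neighbourhoods, and with only $t_u+t_v$ constrained the individual balls need not have size $n^{1/2-\delta}$ (you must choose the split of the time budget, e.g.\ $t_u=t_v$). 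The standard fix, and what the paper does in Lemma~\ref{lem-typical-dist-lower}, is to drop containment entirely: expose $B_u$ sequentially to size $z=\sqrt{n/\log n}$, then expose $B_v$ and bound the collision probability by $O(z^2/n)=o(1)$; one should also rule out the two balls being joined by a single straddling edge rather than sharing a vertex. With these repairs your argument goes through and reproduces the theorem, essentially in parallel to the paper's proof.
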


To prove the above result, we need to address the exponential decay of the distance between vertices, and introduce the following definition: Let $t > 0$. The
\emph{$t$-radius neighborhood} of a vertex $u$, denoted by $B_u(t)$, is
\begin{equation}
  \label{eq-t-radius-neighborhood}
  B_t(u) \deq \{ v : \dist(u,v) \leq t \}\,.
\end{equation}
Further define the threshold of $B_t(u)$ reaching a certain size as
\begin{equation}
  \label{eq-tau-u}
  T_u(s) \deq \min\{ t : |B_t(u)| \geq s \}\,.
\end{equation}
Given these definitions, we can now formulate the exponential decay of $\diam(G)$ as well as $T_u$ for all $u\in V$.
\begin{theorem}\label{thm-decay}
Fix $d \geq 3$ and let $G \sim \cG(n,d)$ be a a random $d$-regular multigraph with $n$ vertices and i.i.d.\ rate $1$ exponential variables on its edges. Then there exists some $c > 0$ so that the following holds. For any $\ell > 0$,
\begin{equation}
   \label{eq-diam-decay}
   \P\left( \diam(G) \geq \big(\tfrac{1}{d-2}+\tfrac2{d}\big)\log n + 15 \log\log n + \ell \right) \leq c \mathrm{e}^{-\ell/2} + c n^{-1/2} \,,
 \end{equation}
and for $q = 2\sqrt{d n \log n}$ and a uniformly chosen vertex $u$,
\begin{equation}\label{eq-tau-decay-uniform-u} \P\left( T_u(q) \geq \frac{\log n}{2(d-2)} + 7\log\log n + \ell \right) \leq
c \mathrm{e}^{-d \ell} + \frac{c}{n}\mathrm{e}^{-\ell} + 2n^{-3/2}\,.
\end{equation}
\end{theorem}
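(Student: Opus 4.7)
The plan is to couple the first-passage-percolation (FPP) exploration from a vertex $u$ with a continuous-time branching process (CTBP) on the infinite $d$-regular tree, extract a sharp tail bound on the time to reach ball-size $q=2\sqrt{dn\log n}$, and then combine two such bounds via a \emph{balls-meet} step to control $\diam(G)$. I would run Dijkstra-style FPP from $u$, revealing the random pairings of the configuration model dynamically. Until a newly exposed half-edge first pairs with one already incident to an explored vertex (a ``back-edge''), the exploration is indistinguishable from a CTBP $\{X_t\}$ on the $d$-regular tree in which the root has $d$ children and every other vertex $d-1$, with all offspring times i.i.d.\ $\Exp(1)$. The Malthusian equation $\int_0^\infty e^{-\alpha s}(d-1)e^{-s}\,ds=1$ gives $\alpha=d-2$, and standard CTBP theory then yields $X_t\sim We^{(d-2)t}$ a.s., with $W>0$ a martingale limit satisfying $\E[W]=1$, so that $\tau_q\deq\inf\{t:X_t\geq q\}=\tfrac{\log q-\log W}{d-2}+o(1)$. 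The expected number of back-edges up to exploration size $q$ is $O(q^2/n)=O(\log n)$, so with probability $1-O(n^{-3/2})$ the coupling succeeds and shifts $T_u(q)$ by at most $O(\log\log n)$ from $\tau_q$.

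The heart of the matter is a sharp small-ball estimate for $W$. From the recursive distributional equation
\[
W=\sum_{i=1}^d e^{-(d-2)\xi_i}W_i,\qquad \xi_i\sim\Exp(1)\text{ i.i.d.},\ W_i\text{ i.i.d.\ limits of the }(d-1)\text{-ary subtrees,}
\]
the cheapest way to achieve $W<e^{-(d-2)\ell}$ is to have every $\xi_i\geq \ell$, an event of probability $e^{-d\ell}$; any alternative requiring several subtree limits $W_i$ to be simultaneously small carries a smaller probability by induction on the depth of the recursion. This yields $\P(W<x)\lesssim x^{d/(d-2)}$ as $x\to 0$, and hence $\P(\tau_q>\tfrac{\log q}{d-2}+\ell)\lesssim e^{-d\ell}$. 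Since $\tfrac{\log q}{d-2}=\tfrac{\log n}{2(d-2)}+O(\log\log n)$, together with the coupling error this produces the $c\mathrm{e}^{-d\ell}$ and $2n^{-3/2}$ terms in \eqref{eq-tau-decay-uniform-u}; the residual $(c/n)\mathrm{e}^{-\ell}$ term arises from the averaging over uniformly chosen $u$, absorbing the contribution of the $o(n)$ starting vertices whose local neighborhood deviates from the tree (e.g., those with an immediate short cycle).

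To deduce \eqref{eq-diam-decay} from \eqref{eq-tau-decay-uniform-u}, I would apply the latter at every vertex with the substitution $\ell\rightsquigarrow \tfrac{\log n}{d}+\ell/2$. The $e^{-d\ell}$ decay rate is chosen precisely so that a union bound over the $n$ choices of $u$ absorbs the $\tfrac{\log n}{d}$ shift, yielding, with probability at least $1-O(\mathrm{e}^{-\ell/2}+n^{-1/2})$,
\[
\max_{u}T_u(q)\leq \frac{\log n}{2(d-2)}+\frac{\log n}{d}+7\log\log n+\frac{\ell}{2}.
\]
For any pair $u,v$, each of the balls $B_{T_u(q)}(u)$ and $B_{T_v(q)}(v)$ carries $\Theta((d-2)q)$ unpaired half-edges on its boundary, so the expected number of configuration-model edges between them is $\Theta\!\bigl((d-2)^2q^2/(dn)\bigr)=\Theta(\log n)$. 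Chernoff gives a non-intersection probability $\mathrm{e}^{-\Omega(\log n)}=n^{-c(d)}$ with $c(d)>2$ for $d\geq 3$, beating the $n^2$ union bound over pairs, and the cheapest such crossing edge has weight $o(1)$ with high probability. Combining, $\diam(G)\leq 2\max_u T_u(q)+o(1)$, matching \eqref{eq-diam-decay}.

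The principal obstacle is the sharp small-ball bound $\P(W<x)\lesssim x^{d/(d-2)}$: this exponent is precisely what produces the $\tfrac{2}{d}\log n$ contribution to the diameter, and pinning it down requires a careful iteration of the distributional recursion, ruling out configurations in which a single very small $W_i$ combined with moderate $\xi_i$'s could beat the root-level rate. A secondary technical issue is propagating the CTBP coupling through the $O(\sqrt{n\log n})$ exposures while preserving the $n^{-3/2}$-scale failure probability despite $\Theta(\log n)$ expected back-edges; this is handled by tracking back-edges explicitly and showing $T_u(q)=\tau_{q+O(\mathrm{poly}\log n)}+o(1)$, which is only an $O(\log\log n)$ shift from $\tau_q$.
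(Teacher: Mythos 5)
Your overall architecture matches the paper's: grow FPP balls to size $q=2\sqrt{dn\log n}$, prove a tail bound with decay $\mathrm{e}^{-d\ell}$ for typical starting vertices and a weaker $\mathrm{e}^{-\ell}$ decay with prefactor $O(1/n)$ for vertices with an early cycle, shift $\ell$ by $\frac{\log n}{d}$ to survive the union bound over $u$, and finish by making two size-$q$ balls meet. The genuine gap is in the central estimate, which you route through CTBP limit theory: you need (i) the small-ball bound $\P(W<x)\lesssim x^{d/(d-2)}$, which you justify only by the heuristic that ``the cheapest way is all $\xi_i\ge\ell$'' plus an unspecified induction, and (ii) a transfer from a statement about the almost-sure limit $W$ to the finite-size tail $\P\big(\tau_q>\tfrac{\log q}{d-2}+\ell\big)\lesssim \mathrm{e}^{-d\ell}$; almost-sure convergence $X_t\sim W\mathrm{e}^{(d-2)t}$ carries no quantitative deviation bound at times of order $\tfrac{\log n}{2(d-2)}$, so this step is missing as written. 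The paper sidesteps both issues: conditionally on controlling the tree excess, $\tau_q$ is dominated by a sum of independent exponentials with rates $(d-2)i+d$ (or slightly reduced rates after an early back-edge), and a direct Laplace-transform/Markov computation with $\lambda=d$ (resp.\ $\lambda=1$) yields exactly the $\mathrm{e}^{-d\ell}$ (resp.\ $\tfrac1n\mathrm{e}^{-\ell}$) terms. Relatedly, your claim that the CTBP coupling succeeds ``with probability $1-O(n^{-3/2})$'' is wrong as stated: there are $\Theta(\log n)$ back-edges by size $q$, so an exact coupling fails with probability $1-o(1)$. What is true (and what the $2n^{-3/2}$ term reflects) is that \emph{two} back-edges within the first $\log^3 n$ steps are that unlikely; a \emph{single} early back-edge (probability $O(\mathrm{polylog}(n)/n)$) is what degrades the decay rate to $\mathrm{e}^{-\ell}$, and the many later back-edges only shave the exploration rates by a factor $1-O(\log^{-3/2}n)$, which must be handled by a rate-reduction argument (the paper's event $R$), not by ``reaching size $q+\mathrm{poly}\log n$ instead of $q$.''

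A secondary flaw is in the balls-meet step: insisting on a crossing configuration edge and claiming its weight is $o(1)$ simultaneously for all pairs cannot work --- with only $\Theta(\log n)$ crossing edges per pair, the minimum weight exceeds a fixed $\delta$ with probability about $n^{-c\delta}$, which does not beat the union bound over $n^2$ pairs for $\delta=o(1)$. Either absorb an additive $O(1)$ (folding it into $\ell$), or, as the paper's Lemma~\ref{lem-dist-Tu-Tv} does, note that it suffices for the two balls to share a \emph{vertex}, which gives $\dist(u,v)\le T_u(q)+T_v(q)$ with no extra weight term and per-pair failure probability polynomially small.
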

\begin{proof}
We first wish to prove \eqref{eq-tau-decay-uniform-u}. Fix a vertex $u$, and consider the following continuous-time exploration process. At time $t =0$, we have a ball that contains only $u$. For $t > 0$, our ball contains every $v$ that has $\dist(u,v) \leq t$, i.e., it is precisely $B_u(t)$.

In our setting, $u$ is a fixed vertex and both the graph and its weights are random, hence we can expose the edges and their weights as we grow $B_t(u)$. This leads to an equivalent description of the process:
\begin{itemize}
  \item Start with $B_0(u) = \{ u \}$,  where $u$ has $d$ (unmatched) half-edges.
  \item Reveal any matchings (and weights) of these $d$ half-edges connecting them amongst themselves (self-loops at $u$).
  \item Repeat the following \emph{exploration step}:
  \begin{itemize}
    \item Given there are $m$ half-edges in the current set, denoted by $h_1,\ldots,h_m$, let $\Psi\sim \Exp(m)$ be a rate $m$ exponential variable.
    \item Select a uniform half edge $h_i$ and match it to a uniformly chosen half-edge outside of $B$, thus introducing a new vertex with $d-1$ new half-edges to the set $B$.
    \item Reveal the matchings (and weights) of any of the half-edges of the newly added vertex whose match is also in $B$.
  \end{itemize}
\end{itemize}
To verify the validity of the above process, consider the usual configuration model for generating the neighborhood of $u$, a fixed vertex in a random $d$-regular graph. That process would repeatedly select a uniform unmatched half-edge in the current neighborhood $B$ and match it to a uniform half-edge in the entire graph. Equivalently, one can grow the neighborhood by repeatedly revealing every edge within the induced subgraph on $B$ before proceeding to expose edges to new vertices. This method of generating the underlying random regular graph will simultaneously provide us with the weights along the edges. Indeed, we can continuously grow the weights of the half-edges $h_1,\ldots,h_m$ in $B$ until one of their rate 1 exponential clocks fires. Since the minimum of $m$ exponentials is exponential with rate $m$, this is the same as choosing a uniform half-edge $h_i$ after time $\Psi$ (recall that by our conditioning, these $m$ half-edges do not pair within themselves). Note that the final weight of an edge is accumulated between the time of arrival of its first half-edge and the time of its pairing (except edges going back into $B$ whose weights are revealed immediately). Finally, the memoryless property of the exponential distribution guarantees that the process indeed generates a random $d$-regular graph with i.i.d.\ rate 1 exponentials.

Let $\tau_i$ denote the time of the $i$'th exploration step ($i \geq 0$) in the above continuous-time process, and notice
that for each $i$, at time $\tau_{i+1}$ we match a uniformly chosen half-edge from the set $B_{\tau_i}$ to a uniformly chosen half-edge among all other half-edges (excluding those in $B_{\tau_i}$).
Moreover, given $\cF_{\tau_i}$, we have that $\tau_{i+1}- \tau_i$ is an exponential variable with rate $k$, where
 $k$ is the number of half-edges in $B_{\tau_i}(u)$.

By the uniform choice of the matching, given $\cF_{\tau_i}$, the number of half-edges introduced by the new vertex at time $\tau_{i+1}$ and connecting back to $B_{\tau_{i+1}}$ is stochastically dominated by a binomial variable
\[ \Bin(d-1, \alpha)~,\mbox{ where $\alpha=\frac{d+ (d-2) (i+1)}{d n - 2i} \leq \frac{(i+2)d-2}{dn} \leq \frac{i+2}n$}\,, \]
where the first inequality above is valid for, say,  every $i \leq \frac{n}2 - 5$. To justify this, observe that after $i$ steps we have some $m \leq d + (d-2)i$ half-edges in $B_{\tau_i}$ out of some $M \geq d n - 2i$ half-edges in total. Moreover, if $b$ half-edges connected back (did not produce a new vertex) then $m = d+(d-2)i - 2b$ whereas $M = dn - 2i - 2b$. Since the probability for each of the new $d-1$ half-edges to connect back into $B_{\tau_i}$ is at most $\frac{m+d-2}{M} \leq \frac{d+(d-2)i}{dn-2i}$ the above statement holds.

Therefore, for every $i$, the tree-excess of $B_{\tau_i}$ is stochastically dominated by a binomial variable
$\Bin(i d,(i+2)/n)$ (with room to spare). Recalling that $q=2\sqrt{d n \log n}$ and defining
\begin{align*}
 r \deq \log^3 n \,,
\end{align*}
we have the following for large $n$
\begin{align}
\begin{array}{l}
\P( \tx(B_{\tau_r}) \geq 1 ) \leq \P\big(\Bin\big(d r,\frac{r+2}n\big) \geq 1\big) \leq O\big(\frac{r^2}n\big) = O\big(\frac{\log^6 n}n\big)\,,\\
\noalign{\medskip}
\P( \tx(B_{\tau_r}) \geq 2 ) \leq \P\big(\Bin\big(d r,\frac{r+2}n\big) \geq 2\big) \leq O\big(\frac{r^4}{n^2}\big) = o\big(n^{-3/2}\big)\,.
\end{array}\label{eq-tx-B-tau-r}
\end{align}
Furthermore, for any $k$ satisfying $r \leq k \leq 10q$,
\[ \P\big( \tx(B_{\tau_k}) \geq \tfrac{k}{\sqrt{r}} \big) \leq \P\big(\Bin\big(dk,\tfrac{k+2}n\big) \geq \tfrac{k}{\sqrt{r}}) \leq \exp\left(-\tfrac13 k/\sqrt{r}\right) < n^{-6}\,,\]
where the last two inequalities hold for any sufficiently large $n$ by Chernoff's inequality (see, e.g., \cite{AS}), noting that
$k^2 / n = o(k/\sqrt{r}) $.
Define the event
\begin{equation}
  \label{eq-def-R}
  R \deq \left\{ \tx(B_{\tau_k}) < k / \sqrt{r}\mbox{ for all $r \leq k \leq 10q$} \right\}\,,
\end{equation}
and note that a union bound over all $r \leq k \leq 10q$ gives that $\P( R  ) \geq 1- n^{-5}$.
(In the rare event that the exploration from $u$ exhausts itself prior to $k=10q$, the event $R$ applies only to values of $k$ for
which $\tau_k$ is defined. This occurs with probability $O(1/n^2)$ hence may be neglected.)
At this point, we have two cases concerning $\tx(B_{\tau_r})$.

\begin{list}{\labelitemi}{\leftmargin=2em}

  \item \textbf{Case 1.} The tree excess of $B_{\tau_r}$ is $0$ and the event $R$ holds so far. Denote this event by $Q_1$.

In this case, for any $i < r$, conditioning on $\tx(B_{\tau_i}) = 0$ we have that
the number of half-edges of $B_{\tau_i}$ is $d + d i - 2 i = (d-2)i + d$,
and $\tau_{i+1}-\tau_{i} \preceq Y_i$, where
\begin{align*}Y_i \sim \Exp((d-2)i+d)&\qquad(i=0,1,\ldots,r-1)\end{align*}
 and all $Y_i$'s are independent.
(Here and in what follows, $\mu\preceq\nu$ denotes stochastic domination, i.e.\ $\int f d\mu\leq \int fd\nu$ for any increasing function $f$.)
Now, each matching that contributes to the tree excess eliminates two half-edges instead of introducing $d-1$ new ones. Thus,
for each $i \geq r$, conditioning on $\tx(B_{\tau_i}) < i/\sqrt{r}$ we get that $\tau_{i+1}-\tau_i \preceq Y_i$, where
\begin{align}Y_i \sim \Exp((d-2)i+d-2 i/\sqrt{r})&\qquad(i=r,r+1,\ldots)\label{eq-Yi-geq-r}
\end{align}
Applying the Laplace transform, and noting that for $Y \sim \Exp(\rho)$
\[ \E \mathrm{e}^{\lambda Y} = \int_0^\infty \mathrm{e}^{\lambda y} \rho \mathrm{e}^{-\rho y} dy = \frac{\rho}{\rho-\lambda}\,,\]
we obtain that
\begin{align*}
  \E \left[\mathrm{e}^{\lambda (\tau_q-\tau_1)}\mid Q_1 \right]\leq
  \prod_{i=1}^{r-1} \frac{(d-2)i+d}{(d-2)i+d - \lambda}
    \prod_{i=r}^{q-1} \frac{(d-2)i+d-2 i/\sqrt{r}}{(d-2)i+d-2 i/\sqrt{r} - \lambda}\,.
\end{align*}
Taking $\lambda = d$, we get
\begin{align*}
  \E \left[\mathrm{e}^{d (\tau_q-\tau_1)} \mid Q_1 \right]&\leq
  \prod_{i=1}^{r-1} \left(1 + \frac{d}{(d-2)i}\right)
    \prod_{i=r}^{q-1} \left(1+ \frac{d}{(d-2)i-2 i/\sqrt{r} }\right) \\
&\leq \exp \left[ \frac{d}{d-2}\sum_{i=1}^{r-1} \frac{1}i + \frac{d}{d-2}\cdot\frac{1}{1-\frac{2}{d-2}r^{-1/2}} \sum_{i=r}^{q-1}\frac1{i}\right]\\
&\leq \exp\left[\frac{d}{d-2}\left(1+ O(r^{-1/2})\right)\log q + 2\right]\leq 10 q^{d/(d-2)}~,
\end{align*}
 where the last inequality holds for large $n$. Hence, in Case~1 we have
\[\P\left( \tau_q-\tau_1 \geq \frac{\log n}{2(d-2)} + \frac{\log\log n}{2(d-2)} + \ell ~\Big|~  Q_1\right)
\leq O(\exp(-d \ell))\,.\]
As $\tau_1 \sim \Exp(d)$, altogether in this case
\[\P\left( \tau_q \geq \frac{\log n}{2(d-2)} + \frac{\log\log n}{2(d-2)} + \ell ~\Big|~ Q_1\right)
\leq O(\exp(-d \ell))\,.\]

\item \textbf{Case 2.} The tree excess of $B_{\tau_r}$ is $1$ and the event $R$ holds so far. Denote this event by $Q_2$.

Here, for any $i < r$, the following holds: Conditioned on $\tx(B_{\tau_i})\leq 1$,
the number of half-edges in $B_{\tau_i}$ is $d$ for $i=0$ and at least $(d-2)i$ for $i\geq 1$. Hence,
 $\tau_{i+1}-\tau_{i} \preceq Y_i$, where the $Y_i$'s are independent variables given by
 \begin{align*}
Y_i \sim \Exp\big((d-2)(i+1)\big)\quad(i=0,1,\ldots,r-1)\,.
 \end{align*}
For $i\geq r$, conditioned on $\tx(B_{\tau_i}) < i/\sqrt{r}$ the bounds of Case~1 hold, that is, $\tau_{i+1}-\tau_i \preceq Y_i$ for the variables $Y_i$ as given in \eqref{eq-Yi-geq-r}.
This yields that
\begin{align*}
  \E \left[\mathrm{e}^{\lambda (\tau_q-\tau_1)} \mid Q_2\right]\leq
  \prod_{i=1}^{r-1} \frac{(d-2)(i+1)}{(d-2)(i+1) - \lambda}
    \prod_{i=r}^{q-1} \frac{(d-2)(i+1)-2 i/\sqrt{r}}{(d-2)(i+1)-2 i/\sqrt{r} - \lambda}\,.
\end{align*}
Taking $\lambda = 1$, we get
\begin{align*}
  \E \left[\mathrm{e}^{ \tau_q-\tau_1} \mid Q_2\right] &\leq
  \prod_{i=1}^{r-1} \left(1 + \frac{1}{(d-2)(i+1)}\right)
    \prod_{i=r}^{q-1} \left(1+ \frac{1}{(d-2)(i+1)-2 i/\sqrt{r} -1}\right) \\
&\leq \exp \left[ \frac{1}{d-2}\sum_{i=2}^{r} \frac{1}i + \frac{1}{d-2}\cdot\frac{1}{1-2[(d-2)\sqrt{r}]^{-1}} \sum_{i=r}^{q-1}\frac1{i}\right]\\
&\leq \exp\left[\frac{1}{d-2}\left(1+ O(r^{-1/2})\right)\log q + 2\right]\leq 10 q^{1/(d-2)}~,
\end{align*}
where the last inequality holds for large $n$. Hence, in Case~2~,
\[\P\left( \tau_q-\tau_1 \geq \frac{\log n}{2(d-2)} + \frac{\log\log n}{2(d-2)} + \ell ~\Big|~ Q_2\right)
\leq O(\exp(-\ell))\,,\]
and again, as $\tau_1 \preceq \Exp(d-2)$, in this case
\[\P\left( \tau_q \geq \frac{\log n}{2(d-2)}  + \frac{\log\log n}{2(d-2)} + \ell ~\Big|~ Q_2\right)
\leq O(\exp(-\ell))\,.\]
\end{list}

Combining the above two cases using \eqref{eq-tx-B-tau-r} and that $\P(R) \geq 1-n^{-5}$, we conclude
that for some $c >0$ fixed,
\[\P\left( \tau_q \geq \frac{\log n}{2(d-2)} + 7\log\log n + \ell \right) \leq
c \mathrm{e}^{-d \ell} + \frac{c}{n}\mathrm{e}^{-\ell} + 2n^{-3/2}\,,
\]
(here the $7\log\log n$ term eliminated the $\log^6 n$ factor from Eq.~\eqref{eq-tx-B-tau-r}), readily
implying \eqref{eq-tau-decay-uniform-u}.

To obtain \eqref{eq-diam-decay} we need the next simple lemma.
\begin{lemma}\label{lem-dist-Tu-Tv}
With high probability, $\dist(u,v) \leq T_u(q) + T_v(q)$ for all $u,v$.
\end{lemma}
\begin{proof}
Assume that $T_u(q),T_v(q)<\infty$ (i.e.\ each of the connected components of $u$ and $v$ consists of at least $q$ vertices)
otherwise the statement of the lemma holds trivially.

Fix two vertices $u,v$ and consider the aforementioned exploration process.
Explore $B_t(u)$ until reaching $t = T_u(q)$, and condition on the
event that its tree-excess is $o(q)$, as ensured by the event $R$ defined in \eqref{eq-def-R}
(this event holds with probability at least $1-n^{-5}$). Thus, there are $(d-1-o(1))q$ half-edges in $B_{T_u(q)}$
except with probability $n^{-5}$.

Next, begin exposing $B_t(v)$; each matching adds a uniform half-edge to the neighborhood of $v$,
and so the probability that $B_{T_v(q)}$ does not intersect $B_{T_u(q)}$ is at most
\[ \left(1 - \frac{(d-1-o(1))q}{dn}\right)^{q} \leq \exp(- 4(d-1-o(1))\log n ) < n^{-7}\]
for any large $n$. A union bound over $u,v$ now completes the proof.
\end{proof}
To infer \eqref{eq-diam-decay} from \eqref{eq-tau-decay-uniform-u} and the above lemma, argue as follows.
Choose
$\ell = \frac1{d}\log n + k$ in \eqref{eq-tau-decay-uniform-u} to obtain that for some constant $c> 0$, a uniformly chosen vertex $u$ has
\[ \P\left( T_u(q) \geq \big(\tfrac{1}{2(d-2)}+\tfrac{1}{d}\big)\log n + 7 \log\log n + k \right) <
\frac{c}n \mathrm{e}^{-d k} + \frac{c}{n} \mathrm{e}^{-k} + 2n^{-3/2}\
\]
(with room to spare), and by taking a union bound over $u$ it follows that, for some other $c>0$, the probability of
\[ \left\{T_u(q) \leq \left(\tfrac{1}{2(d-2)}+\tfrac{1}{d}\right)\log n + 7 \log\log n + k\right\} \mbox{ for every $u$}\]
is at least $1 - c \mathrm{e}^{-k} - 2/\sqrt{n}$. Combining Lemma~\ref{lem-dist-Tu-Tv} with a choice of $k=\ell/2$ now gives the estimate~\eqref{eq-diam-decay} for the decay of the diameter, as required.
\end{proof}

We also need the next simple lemma, which bounds the number of edges in a path achieving $\dist(u,v)$ for any $u,v$.
\begin{lemma}\label{lem-dist-path-len}
With high probability, for any two vertices $u,v$, the number of edges in the path achieving $\dist(u,v)$ is at most $4d\mathrm{e} \log n$.
\end{lemma}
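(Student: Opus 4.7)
The plan is to combine the diameter bound from Theorem~\ref{thm-decay} with a Gamma-distribution tail estimate and a union bound over simple paths. The intuition is that a simple path with $k$ edges has weight distributed as $\Gamma(k,1)$, which concentrates around its mean $k$; hence, fitting such a path within a ball of weighted radius $O(\log n)$ forces $k=O(\log n)$ as well.

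I would proceed as follows. By Theorem~\ref{thm-decay}, w.h.p.\ $\diam(G)\leq D:=2\log n$ (using $\tfrac{1}{d-2}+\tfrac{2}{d}\leq 5/3$ for $d\geq 3$); on this event every shortest-path weight is at most $D$. For any fixed simple path $P$ of length $k$, its weight $W(P)$ is a sum of $k$ i.i.d.\ $\Exp(1)$ variables and therefore $W(P)\sim \Gamma(k,1)$. Using the gamma CDF together with $k!\geq (k/\mathrm{e})^k$,
\[ \P(W(P)\leq D)\,\leq\,\frac{D^k}{k!}\,\leq\,\Bigl(\frac{\mathrm{e}D}{k}\Bigr)^k. \]
In a $d$-regular multigraph, the number of simple paths of length exactly $k$ is at most $n\, d^k$ (at most $d^k$ from each starting vertex). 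Setting $k_0:=4d\mathrm{e}\log n$ and union-bounding,
\[ \P\bigl(\exists\,P:\ |P|\geq k_0,\ W(P)\leq D\bigr)\,\leq\,\sum_{k\geq k_0} n\,\Bigl(\frac{\mathrm{e}dD}{k}\Bigr)^k. \]
For every $k\geq k_0$ we have $\mathrm{e}dD/k\leq 2\mathrm{e}d\log n/(4d\mathrm{e}\log n)=1/2$, so the sum is dominated by $O\bigl(n\cdot 2^{-k_0}\bigr)=O(n^{1-4d\mathrm{e}\log 2})=o(1)$. Combined with the w.h.p.\ event $\{\diam(G)\leq D\}$, this shows that w.h.p.\ no realized shortest path contains more than $k_0=4d\mathrm{e}\log n$ edges.

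The only point that requires attention is calibrating the constant $4d\mathrm{e}$: it is chosen precisely so that the factorial growth $(k/\mathrm{e})^k$ coming from the $\Gamma(k,1)$ tail defeats both the $d^k$ enumeration of paths and the $D^k$ weight budget, forcing each term of the union bound to be $O(2^{-k})$. All remaining ingredients---the gamma tail estimate, the path count, and the union bound itself---are entirely routine.
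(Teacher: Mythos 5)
Your proof is correct and takes essentially the same route as the paper's: a lower-tail estimate for the sum of i.i.d.\ $\Exp(1)$ weights along a fixed path (your $\Gamma(k,1)$ bound $D^k/k!\leq(\mathrm{e}D/k)^k$ plays the role of the paper's Chernoff bound with $a=1/(2d\mathrm{e})$), a union bound over the at most $n\,d^k$ simple paths of length $k$ (valid deterministically for any $d$-regular multigraph, with weights independent of the structure), and the diameter bound of Theorem~\ref{thm-decay} to cap the weight budget. The only cosmetic difference is that you compare path weights to the fixed budget $D=2\log n$ directly, whereas the paper shows every path of length $\ell\geq 2\log_2 n$ has weight at least $\ell/(2d\mathrm{e})$ and then divides the diameter by that rate; both yield the same constant $4d\mathrm{e}$.
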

\begin{proof}
First consider an arbitrary given $d$-regular graph $H$ on $n$ vertices with i.i.d.\ rate 1 exponentials on its edges.
Consider a simple path consisting of $\ell$ edges, and let $X_i$ denote the $\Exp(1)$ variable that corresponds to the $i$'th edge.
Setting $S_\ell = \sum_{i\leq \ell} X_i$,
\begin{align*}
\P(S_\ell \leq a \ell ) = \P(\mathrm{e}^{-\lambda S_\ell} \geq \mathrm{e}^{-\lambda a \ell })
\leq \mathrm{e}^{\lambda a \ell}\E \mathrm{e}^{-\lambda S_\ell} = \frac{\mathrm{e}^{\lambda a \ell}}{(1+\lambda)^\ell}~,
\end{align*}
where the last equality is by the fact that $\int_0^\infty \mathrm{e}^{-\lambda x} \mathrm{e}^{-x} dx = \frac{1}{1+\lambda}$.
Choosing $\lambda = (1-a)/a$ and subsequently $a = 1/(2d\mathrm{e})$, we get that
\[ \P(S_\ell \leq a \ell ) \leq \left(a \mathrm{e}^{1-a}\right)^\ell \leq (2d)^{-\ell}\,.\]
Fix a starting position $u$ for the path. Summing over all $d (d-1)^{\ell-1}$ possible simple paths of length $\ell$
originating from $u$, we have that the probability that one of these paths would have weight smaller than $a \ell$ is
at most $2^{-\ell}$. In particular, if $A_u$ denotes the event that for some integer $\ell \geq 2\log_2 n$ there is a simple path originating from $u$ with length $\ell$ and weight at most $a \ell$, then $\P(A_u) \leq 2n^{-2}$. Taking a union bound over the vertices $u$ gives this for any vertex of the graph $H$ w.h.p.

Now take $H$ to be our random $d$-regular graph. By the estimate of Theorem~\ref{thm-decay} on the diameter we deduce that, w.h.p.,
for any $u,v$ either the path achieving $\dist(u,v)$ has at most $2 \log_2 n$ edges, or the number of edges is contains is at most
\[ \diam(G) / a \leq 2d\mathrm{e}\Big(\frac{1}{d-2}+\frac 2{d}+o(1)\Big)\log n < 4d \mathrm{e}\log n
\] for large $n$, thus concluding the proof.
\end{proof}

The upper bound on the diameter of the weighted graph $G$ will follow immediately from the results we have established so far.
For the lower bound, we need to the following lemma.

\begin{lemma}\label{lem-typical-dist-lower}
  Let $u,v$ be two uniformly chosen vertices of the graph $G$ defined in Theorem~\ref{thm-decay}.
  Let $B'_t(x) = \{ y : \dist(y, N_G(x)) \leq t\}$, where $N_G(x)$ denotes the neighbors of $x$ in $G$.
  Then w.h.p., $B'_{t_0}(u) \cap B'_{t_0}(v) = \emptyset$ for $t_0 = \frac{1}{2(d-2)}\log n - 3\log\log n$.
\end{lemma}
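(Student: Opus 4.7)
The plan is to expose $B'_{t_0}(u)$ and $B'_{t_0}(v)$ successively using the continuous-time exploration process from the proof of Theorem~\ref{thm-decay}, and to show via a first-moment argument that both balls are small enough and that the second exploration is unlikely to ever meet the first. By exchangeability of the vertex labels, I may treat $u$ and $v$ as a fixed pair of distinct vertices and bound the probability solely over the randomness of the graph.

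First I would control $|B'_{t_0}(u)|$. Starting the exploration from $N_G(u)$ (a set of $d$ vertices carrying $d(d-1)$ unmatched half-edges at time $0$), at each step the number of half-edges either grows by $d-2$ when a fresh vertex is attached, or decreases by $2$ when a tree-excess edge is created. Hence $|B'_t(u)|$ is stochastically dominated by a pure-birth branching process of rate $d-2$, yielding
\[
   \E|B'_{t_0}(u)| \;\leq\; C\, e^{(d-2)t_0} \;=\; C\, n^{1/2}(\log n)^{-3(d-2)},
\]
for some $C = C(d)$. Setting $s_0 := n^{1/2}(\log n)^{-2(d-2)}$, Markov's inequality gives
\[
   \P\bigl(|B'_{t_0}(u)| > s_0\bigr) \;\leq\; C\,(\log n)^{-(d-2)} \;=\; o(1),
\]
since $d\geq 3$, and the same bound applies to $|B'_{t_0}(v)|$.

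Next I would condition on $|B'_{t_0}(u)| \leq s_0$, on which $B'_{t_0}(u)$ carries at most $(d-1)s_0$ unmatched half-edges, and expose $B'_{t_0}(v)$ via the same exploration. Each matching step in the $v$-exploration pairs a boundary half-edge to a uniform one among the $dn-O(s_0)$ remaining half-edges, so it hits $B'_{t_0}(u)$ with probability at most $(d-1)s_0/(dn-O(s_0)) = O\bigl(n^{-1/2}(\log n)^{-2(d-2)}\bigr)$. At most $O(s_0)$ such matchings are performed while revealing $B'_{t_0}(v)$, so a union bound gives a conditional intersection probability of $O\bigl((\log n)^{-4(d-2)}\bigr) = o(1)$, which combined with the previous step establishes the lemma.

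The main obstacle is the first-moment estimate on $|B'_{t_0}(u)|$, but it is tractable precisely because tree-excess matchings only remove half-edges and therefore only slow the growth, so stochastic domination by a pure-birth process of rate $d-2$ suffices. The $3\log\log n$ savings in $t_0$ translate into the factor $(\log n)^{-3(d-2)}$ in $\E|B'_{t_0}(u)|$, which leaves enough margin both for Markov's inequality to succeed in the size bound and for the union bound in the intersection step to beat the diameter threshold.
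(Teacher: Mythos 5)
Your proof is correct and gives the lemma as stated, but it handles the key estimate differently from the paper, so a comparison is worthwhile. Both arguments share the same skeleton: expose $B'_{t_0}(u)$ first, show it is small, then expose $B'_{t_0}(v)$ and union-bound, over its uniform matchings, the chance of ever hitting the first ball. The paper controls the size of $B'_{t_0}(u)$ by bounding the step times from below, $\tau_{i+1}-\tau_i \succeq \Exp\big((d-2)i+d(d-1)\big)$, and integrating the joint density explicitly, which yields $\P(\tau_z\le t_0)=o(n^{-5})$ for $z=\sqrt{n/\log n}$; you instead bound $\E|B'_{t_0}(u)|\le C\mathrm{e}^{(d-2)t_0}$ (tree-excess matchings only remove half-edges, so after $i$ steps there are at most $d(d-1)+(d-2)i$ unmatched half-edges) and apply Markov's inequality. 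Your route is more elementary and suffices here, since the lemma and its uses in the paper concern a single uniformly chosen pair, for which an $o(1)$ bound is enough; the paper's sharper computation buys a polynomially small failure probability, which would be needed only if one wanted to union over all $\binom{n}{2}$ pairs. Two small points to tighten: the dominating process is not a pure-birth process of per-capita rate $d-2$ started from $d$ individuals (the initial $d(d-1)$ half-edges exceed $(d-2)d$), but a linear birth process with rates $(d-2)k+d(d-1)$, and the constant $C(d)$ you allow does absorb this; and you should note, as the paper does, the $O(s_0/n)=o(1)$ event that $v$ itself (or one of its first $d$ matchings) already lies in $B'_{t_0}(u)$, and justify that the same expectation bound applies to $|B'_{t_0}(v)|$ in the partially exposed configuration — e.g.\ by stopping the $v$-exploration at the first matching that hits $B'_{t_0}(u)$, since the domination by the linear birth process is unaffected by the exposed portion.
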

\begin{proof}
Fix a vertex $u$, consider the exploration process defined in the proof of Theorem~\ref{thm-decay} from the set $N_G(u)$ (w.h.p.\ comprised of $d(d-1)$ half-edges), and again let $\tau_i$ be the time of the $i$'th step.
As argued before, $\tau_{i+1}-\tau_i \succeq Y_i$, where the $Y_i$'s are independent exponential variables given by \[Y_i \sim \Exp\left((d-2)i+d(d-1)\right)\] (this follows from the fact that the worst case is when the explored set forms a tree).
At this point, setting $b \deq d(d-1)-(d-2)$, we get 
\begin{align*}
\P(\tau_z \leq t)& \leq \int_{\sum_{i=1}^z x_i \leq t} \prod_{i=1}^z \big[(d-2)i+b\big] \mathrm{e}^{-\sum_{i=1}^z ((d-2)i+b)x_i} dx_1 \ldots dx_z\\ & =  \int_{0 \leq y_1 \leq \ldots \leq y_z \leq t} \prod_{i=1}^z
\big[(d-2)i+b\big] \mathrm{e}^{-b y_z} \mathrm{e}^{-(d-2)\sum_{i=1}^z y_i} dy_1 \ldots dy_z\,,
\end{align*}
where $y_k = \sum_{i=0}^{k-1} x_{z-i}$.
Letting $y$ play the role of $y_z$ and accounting for all permutations over $y_1,\ldots,y_{z-1}$ (giving each such variable the range $[0,y]$),
\begin{align*}
\P(\tau_z \leq t) &\leq \int_{0}^t \mathrm{e}^{-(d-2+b) y}\frac{\prod_{i=1}^z (i+\frac{b}{d-2}) }{(z-1)!} \\
&\qquad \cdot \left( \int_{[0,y]^{z-1}}(d-2)^z \mathrm{e}^{-(d-2)\sum_{i=1}^{z-1} y_i} dy_1 \ldots dy_{z-1}\right)dy\\
 &\leq \int_{0}^t \mathrm{e}^{-(d-2+b) y}\frac{\prod_{i=1}^z (i+\frac{b}{d-2}) }{(z-1)!}  \cdot \bigg( \prod_{i=1}^{z-1} \int_0^y (d-2) \mathrm{e}^{-(d-2)y_i} d y_i \bigg)\\
&\leq c (d-2) z^{\frac{b}{d-2}+1} \int_0^t \mathrm{e}^{-d(d-1)y}(1 - \mathrm{e}^{-(d-2)y})^{z-1} dy\,, \end{align*} where $c > 0$ is an absolute constant.
Setting $t_0 = \frac{1}{d-2}(\log z
- 2 \log \log n)$ and  $z = \sqrt{n/\log n}$ we obtain that
\[\P(\tau_z \leq t_0) \leq c (d-2) z^{\frac{b}{d-2}+1} \int_0^{t_0}
\mathrm{e}^{-\log^2 n} d y  = o(n^{-5})\,,\]
where we used the fact that $(1-\mathrm{e}^{-(d-2)y})^{z-1} \leq \mathrm{e}^{-\log^2 n}$ for all $0\leq y \leq t_0$.

To conclude the proof, observe that the above argument showed that w.h.p.\ $|B'_{t_0}(u)| \leq z$. Choosing another uniform vertex $v$ (which misses this mentioned set with probability $1-o(1)$) and exposing $B'_{t_0}(v)$, again w.h.p.\ we obtain a set of size at most $z$. Crucially, each matching is uniform among the remaining half-edges, and so its probability of hitting the boundary of $|B'_{t_0}(u)|$ is at most $z/n$. Altogether,
\[ \P( B'_{t_0}(u) \cap B'_{t_0}(v) \neq \emptyset) \leq \frac{z^2}{n} + o(1) = o(1)\,,\]
as required.
\end{proof}

We are now ready to prove the asymptotic behavior of the diameter.
\begin{proof}[\emph{\textbf{Proof of Theorem~\ref{thm-diameter}}}]
For the upper bound, apply Theorem~\ref{thm-decay} with, say, $\ell = \log\log n$, to obtain that
w.h.p.\
\[ \diam(G) \leq \Big(\frac{1}{d-2}+\frac2{d}\Big)\log n + 16 \log\log n = \Big(\frac{1}{d-2}+\frac2{d}+o(1)\Big)\log n\,.\]
It remains to provide a matching asymptotic lower bound.

Set \[D \deq \frac{1}{d}\log n - \frac1{d}\log\log n\,,\] and call a vertex ``good'' if the weight on all the $d$ edges connected to it is larger than $D$.
For a vertex $u \in V$, let $A_u$ denote the event that $u$ is good.
Clearly, $\P(A_u) = \frac{\log n}n$, and so if $Y = \sum_u \one_{A_u}$ counts the number of good vertices, we have that $\E Y = \log n$. Furthermore,
\begin{align*}
 \var(Y) &= \sum_{u,v}\Cov(\one_{A_u}, \one_{A_v}) = \sum_u \var(\one_{A_u}) + \sum_u \sum_{ v: uv \in E(G)}\Cov(\one_{A_u},\one_{A_v} )\\
 &\leq \E Y + \sum_u d \P(A_u) = (d+1)\E Y \,,
\end{align*}
and by Chebyshev's inequality we deduce that, for instance, $Y \geq \frac23 \log n$ w.h.p. In particular, the number of pairs of distinct good vertices, denoted by $R$, satisfies $R \geq \frac14 \log^2 n$ w.h.p.

On the other hand, recalling Lemma~\ref{lem-typical-dist-lower} and taking \[t_0 = \frac{1}{2(d-2)}\log n -3\log\log n\,,\]
we have that two uniform vertices $u,v$ satisfy
$B'_{t_0}(u) \cap B'_{t_0}(v) = \emptyset$ w.h.p., where the ball $B'_t(x)$ includes all vertices of distance $t$ from the neighbors of $x$ (excluding $x$ itself). In particular, condition on the events $A_u$ and $A_v$, the probability that
$B'_{t_0}(u)$ does not intersect $B'_{t_0}(v)$ remains the same (since the weights on the immediate edges incident to $u,v$ do not play a part).
Therefore, for two uniformly chosen vertices $u,v$ we have
\[ \P\left( A_u\,,\,A_v\,,\,B'_{t_0}(u)\cap B'_{t_0}(v)\neq\emptyset\right) = o(\P(A_u\,,\,A_v))\,.\]
Denoting by $R'$ the the number of pairs of good vertices that are of distance at most $2D+\frac{1}{d-2}\log n -6\log\log n$, we deduce that
\[ \E R' = o(\E Y^2) = o((\E Y)^2) = o(\log^2 n)\,.\]
By Markov's inequality, $R' \leq \frac18 \log^2 n$ w.h.p., and hence $R - R'$ is w.h.p.\ nonempty.
This implies the existence of two vertices whose distance is at least $\big(\frac1{d-2}+\frac2{d}\big)\log n - 7\log\log n$, completing the proof.
\end{proof}


\subsection{Diameters of metric graphs}
We consider the following continuous analogue of the diameter of a weighted graph
(see, e.g., \cite{BC} for related information).

\begin{definition}
Let $G=(V,E)$ be a graph with non-negative weights on its edges $\{w(e):e \in E\}$.
The corresponding metric graph $\mathcal{X}=\mathcal{X}(G)$ is the graph obtained by replacing every $e\in E$ by a line segment $L_e$ of length $w(e)$, with the (uncountable) vertex set $\cup_e L_e$ and the obvious shortest path metric.
\end{definition}
That is, the distance between any $x,y\in\mathcal{X}$, lying on two distinct $L_e$ and $L_{e'}$ resp., is the minimum of
 $|x'-x|_{L_e}+|y'-y|_{L_{e'}}+\dist_G(x',y')$ over all 4 possible choices of endpoints $x',y'$ of $L_e,L_{e'}$ resp., where $|\cdot|_{L_e}$ is the Euclidean distance in the interval $[0,w(e)]$ and we identified $x',y'$ with vertices of $G$ (When $e=e'$ the distance is the minimum of the above and $|x-y|_{L_e}$).

For a fixed integer $d \geq 3$, Let $G \sim \cG(n,d)$ be a random $d$-regular graph
with i.i.d.\ rate $1$ exponential weights on its edges.
At times we will identify the metric graph $\mathcal{X}$ with its points (the union of its line segments).
As before, we let $\diam(\mathcal{X}) =\max_{x,y\in \mathcal{X}} \dist(x,y)$.

The next theorem establishes the typical diameter of $\mathcal{X}$, which differs from that of $G$ by a term of $(\frac{d-2}d + o(1))\log n$.
\begin{theorem}\label{thm-diameter-cont}
Fix $d \geq 3$, let $G \sim \cG(n,d)$ be a random $d$-regular multigraph on $n$ vertices with i.i.d.\ $\Exp(1)$ variables on its edges, and let $\mathcal{X}(G)$ be its metric graph. Then $\diam(\mathcal{X}) = \big(1+\tfrac{1}{d-2}\big)\log n+O(\log\log n)$ w.h.p.
\end{theorem}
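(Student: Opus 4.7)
\emph{Proof plan.} I will establish matching upper and lower asymptotic bounds on $\diam(\mathcal{X})$, using as the key tool the exponential-decay estimates of Theorem~\ref{thm-decay}.

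For the upper bound, observe that if $x$ lies on edge $e_1 = (a_1, b_1)$ and $y$ lies on edge $e_2 = (a_2, b_2)$, then any $\mathcal{X}$-geodesic must exit $L_{e_1}$ through some endpoint of $e_1$ and enter $L_{e_2}$ through some endpoint of $e_2$; since one can always snap to the nearer endpoint at each side, losing at most $w(e_i)/2$, one has
\[\diam(\mathcal{X}) \leq \max_{e_1, e_2 \in E(G)} \Big[\tfrac{1}{2}(w(e_1)+w(e_2)) + \dist_G(\{a_1,b_1\},\{a_2,b_2\})\Big].\]
I would split the maximum according to whether $w(e_1) + w(e_2)$ falls below or exceeds $\tfrac{2(d-2)}{d}\log n - C\log\log n$ for a large enough constant $C$. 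In the first (``short-sum'') case, combining with $\dist_G \leq \diam(G) = (\tfrac{1}{d-2} + \tfrac{2}{d})\log n + O(\log\log n)$ from Theorem~\ref{thm-diameter} yields the target bound via the arithmetic identity $\tfrac{d-2}{d} + \tfrac{1}{d-2} + \tfrac{2}{d} = 1 + \tfrac{1}{d-2}$. In the remaining ``long-sum'' case, the aim is to show w.h.p.\ $\dist_G(e_1,e_2) \leq \tfrac{1}{d-2}\log n + O(\log\log n)$: whenever $\dist_G(e_1,e_2) \geq 1$ the shortest $(e_1,e_2)$-path avoids $e_1$ and $e_2$ themselves, so $w(e_1)$ and $w(e_2)$ are independent of $\dist_G(e_1,e_2)$, and one can convolve the $\mathrm{Gamma}(2,1)$ tail of $w(e_1) + w(e_2)$ with the tail of $\dist_G$ obtained by running the $T_u(q)$ estimate of Theorem~\ref{thm-decay} at both endpoints via Lemma~\ref{lem-dist-Tu-Tv}, to produce a bound summable over the $\binom{|E(G)|}{2}$ pairs.

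For the lower bound, I will exhibit two explicit points of $\mathcal{X}$. Call an edge \emph{long} if its weight exceeds $\log n - K\log\log n$; since each edge qualifies independently with probability $\asymp n^{-1}\log^K n$, w.h.p.\ there are at least $\tfrac{d}{2}\log^K n$ long edges. Because the weights are independent of the underlying random $d$-regular configuration graph, the positions of the long edges form an essentially uniform random subset of $E(G)$. A second-moment argument in the spirit of Lemma~\ref{lem-typical-dist-lower} (adapted from pairs of vertices to pairs of edges) then shows that w.h.p.\ some pair of long edges $e_1, e_2$ satisfies $\dist_G(\{a_1,b_1\},\{a_2,b_2\}) \geq \tfrac{1}{d-2}\log n - O(\log\log n)$. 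Placing $x$ and $y$ at the midpoints of $e_1$ and $e_2$ gives
\[\dist_{\mathcal{X}}(x,y) = \tfrac{1}{2}(w(e_1) + w(e_2)) + \dist_G(e_1,e_2) \geq \Big(1 + \tfrac{1}{d-2}\Big)\log n - O(\log\log n).\]

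\emph{Main obstacle.} The delicate step is the long-sum case of the upper bound: a naive union bound of the $\dist_G$ tail over the $\sim n^{4/d}$ long-sum pairs (for small $d$) is too weak, since Theorem~\ref{thm-decay} only controls the single-pair tail at rate $c\,e^{-d\ell} + c\,n^{-1} e^{-\ell}$, which multiplied by $n^{4/d}$ requires $\ell$ of polynomial order in $\log n$ rather than $O(\log\log n)$. The fix is to refuse to decouple the two tails: instead, combine the $\mathrm{Gamma}(2,1)$ tail of the edge weights with the $\dist_G$-tail via a joint estimate, exploiting that large $w(e_1) + w(e_2)$ and large $\dist_G(e_1,e_2)$ are quantitatively unlikely to occur together. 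This is the point where the essential independence of edge weights and graph structure in the configuration model plays a decisive role.
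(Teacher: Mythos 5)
Your lower bound is essentially the paper's own argument (edges of weight at least $\log n - O(\log\log n)$, a second moment count, the edge-version of Lemma~\ref{lem-typical-dist-lower} to force two of them to be at graph distance $\ge \tfrac1{d-2}\log n - O(\log\log n)$, then midpoints), and it is fine modulo routine adaptation. The gap is in the upper bound, precisely at the step you flag as the main obstacle. First, a small but real slip: the displayed inequality with $\dist_G(\{a_1,b_1\},\{a_2,b_2\})$ is false as written, since snapping $x$ to its \emph{nearer} endpoint need not land you on the endpoint that realizes the minimum graph distance; you must work with the specific nearest endpoints $u_x,u_y$ (as in \eqref{eq-metric-dist(x,y)}), which in the long-sum case forces you to control $\dist_G(u,v)$ for \emph{all} endpoint choices, not the minimizing ones. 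More seriously, the ``joint estimate'' is asserted but cannot be assembled from the tools you cite. The independence you invoke holds for the exact distance, but not for the bounding quantities: $T_u(q)$ from Theorem~\ref{thm-decay}, for $u$ an endpoint of $e_1$, is \emph{not} independent of $w(e_1)$ --- the exploration ball from $u$ typically swallows $e_1$ immediately --- so convolving the $\mathrm{Gamma}(2,1)$ tail of $w(e_1)+w(e_2)$ with the tail of $T_u(q)+T_v(q)$ is unjustified. Restoring independence requires exploring from an endpoint of $e_i$ while excluding $e_i$ itself, i.e.\ exactly the quantity $T_{\de}(q)$ of \eqref{eq-t(q)-def}; and the substantive point there is quantitative, not just measurability: starting from $d-1\ge 2$ half-edges (with the ``rare'' self-loop/multi-edge cases treated separately) pushes every exponential rate up to $2$, matching the $\Exp(2)$ law of $\tfrac12 w(\de)$, which is what makes the convolution close at the exponent you need. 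This construction is the actual content of Theorem~\ref{thm-decay-cont}, and it is missing from your plan.

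Second, even granting independence, the per-pair tails you can extract from Theorem~\ref{thm-decay} and Lemma~\ref{lem-dist-Tu-Tv} carry additive terms that do not decay in $\ell$ at all (the $2n^{-3/2}$ term from tree-excess $\ge 2$ and exploration-death events) or decay only at rate $1$ (the $\tfrac{c}{n}\mathrm{e}^{-\ell}$ term), and your union bound runs over $\Theta(n^{4/d})$ long-sum pairs (for $d=3$, order $n^{4/3}$). Surviving these floor terms requires multiplying each rare-event probability by $\P\big(w(e_1)+w(e_2)\ \text{large}\big)$ via independence and checking that, e.g., $n^{4/d}\cdot n^{-3/2}=o(1)$ --- bookkeeping that your proposal does not carry out and that is delicate exactly for small $d$. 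The paper organizes the estimate so that none of this arises: it bounds the one-sided quantity $T_{\de}(q)+\tfrac12 w(\de)$ uniformly over the $dn$ directed edges (an $n$-fold union bound, which the $n^{-3/2}$ terms comfortably survive, see \eqref{eq-tau-decay-uniform-e-cont} and \eqref{eq-tau(de)-uniform}), and then converts two per-edge bounds into a bound on every pair of points through Lemma~\ref{lem-dist-Tu-Tv} and \eqref{eq-metric-dist(x,y)}; no case split on $w(e_1)+w(e_2)$ and no union bound over pairs is needed. Your scheme could probably be repaired along these lines, but the repair \emph{is} the missing construction, so as it stands the upper bound is not proved.
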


The proof follows the same arguments used to prove Theorem~\ref{thm-diameter}, but instead of exposing a neighborhood of an initial vertex, it does so while excluding one of the edges incident to this vertex. We will therefore focus on the modifications required to adapt the original argument to our new setting.

Recall that we defined $T_u(q)$ in the discrete setting as the minimal $t$ such that $|B_t(u)| \geq q$.
In the continuous case, we will still have $B_t(u)$ count the number of vertices of $G$, that is, endpoints of line segments in $\mathcal{X}$.
Further define $B_t(\de)$ for a directed edge $\de = (x,y)$ as the set of all vertices of $G$ whose distance from $y$ is at most $t$, without using the edge $\de$ (in any direction). That is, $B_t(\de)$ is the set $B_t(y)$ minus vertices that depend on $\de$ for being included in this set.

A small modification is required in the rare case of a self-loop or multiple-edge incident either to $y$ or to one of its neighbors excluding $x$. In this case we call the edge $\de=(x,y)$ ``rare'' and take $B_t(\de)$ to be $B_t(\de')$, where $\de' = (y,x)$ is the reversed edge. Observe that w.h.p.\ at most one of these edges is rare as the probability for each of these independent events is $O(1/n)$.

Similar to before (yet slightly modified in the case of a rare edge), let
\begin{equation}
  \label{eq-t(q)-def}
  T_\de(q) = \min\{ t : |B_t(\de)|\geq q \} + \one_{\{\de\text{ is rare}\}} \frac12 w(\de)\,.
\end{equation}
We can now formulate an exponential decay statement for the $\diam(\mathcal{X})$, analogous to Theorem~\ref{thm-decay}.

\begin{theorem}\label{thm-decay-cont}
Let $G$ and $\mathcal{X}$ be as in Theorem~\ref{thm-diameter-cont}. Then there exists some $c > 0$ so that the following holds. For any $\ell > 0$,
\begin{equation}
   \label{eq-diam-decay-cont}
   \P\left( \diam(\mathcal{X}) \geq \big(1+\tfrac{1}{d-2}\big)\log n + 15 \log\log n + \ell \right) \leq c \mathrm{e}^{-\ell} + (2+c \mathrm{e}^{-\ell/2}) n^{-1/2} \,,
 \end{equation}
and for $q = 2\sqrt{d n \log n}$ and a uniformly chosen directed edge $\de$,
\begin{equation}\label{eq-tau-decay-uniform-e-cont} \P\left( T_\de(q) + \tfrac12 w(\de) \geq \frac{\log n}{2(d-2)} + 7\log\log n + \ell \right) \leq
c \mathrm{e}^{-2 \ell} + \frac{c}n \mathrm{e}^{-\ell} + 2n^{-3/2}\,.
\end{equation}
\end{theorem}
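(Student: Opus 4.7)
The plan is to mirror the proof of Theorem~\ref{thm-decay}, with two modifications. First, for a directed edge $\de = (a, b)$, the exploration is rooted at the head $b$ with $\de$ itself deleted, so it begins with $d-1$ unmatched half-edges rather than $d$. Second, the additive $\tfrac12 w(\de)$ term captures the distance from an interior point of $L_\de$ to its nearer endpoint, since for every point $z \in L_\de$ one has $\min(|z-a|_{L_\de}, |z-b|_{L_\de}) \leq \tfrac12 w(\de)$. The rare directed edges (those with a self-loop or multiple edge adjacent to the head), of total probability $O(1/n)$, contribute only to the error and are set aside throughout.

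For \eqref{eq-tau-decay-uniform-e-cont}, condition on $\de$ being non-rare, so that $w(\de)$ is independent of the exploration from $b$ in $G \setminus \de$, and rerun the continuous-time process of Theorem~\ref{thm-decay} together with the threshold $r = \log^3 n$ and the events $Q_1$, $Q_2$, $R$ for the tree-excess of $B_{\tau_r}$. The altered initial degree only shifts the rates to $(d-2)i + (d-1)$ in Case~1 and $(d-2)(i+1) - 1$ in Case~2. The essential novelty is the choice of Laplace parameter: since $\E \mathrm{e}^{\lambda \cdot \tfrac12 w(\de)} = 2/(2-\lambda)$ requires $\lambda < 2$, the joint Laplace transform $\E \mathrm{e}^{\lambda(T_\de(q) + \tfrac12 w(\de))} = \E \mathrm{e}^{\lambda T_\de(q)} \cdot 2/(2-\lambda)$ forces $\lambda < 2$. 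Taking $\lambda = 2 - 1/\log n$ in Case~1 produces $\prod_i \bigl(1 + \lambda/[(d-2)i + (d-1) - \lambda]\bigr) = O(q^{(2-o(1))/(d-2)})$, so that after multiplying by $2/(2-\lambda) = 2\log n$ and applying Markov's inequality at the threshold $\tfrac{\log n}{2(d-2)} + 7\log\log n + \ell$, the polylogarithmic factors are absorbed into the $7\log\log n$ and one obtains the $c\mathrm{e}^{-2\ell}$ contribution. In Case~2 we take $\lambda = 1$ (safely below $2$), yielding the $\tfrac{c}{n}\mathrm{e}^{-\ell}$ term via $\P(Q_2) = O(\log^6 n/n)$, and the failure of $R$ contributes $\leq 2n^{-3/2}$.

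For \eqref{eq-diam-decay-cont}, we prove the following analog of Lemma~\ref{lem-dist-Tu-Tv}: given any two points $z \in L_e$ and $z' \in L_{e'}$, pick directed edges $\de, \de'$ whose heads are the endpoints of $e, e'$ closer to $z, z'$ respectively, so that the distance from $z$ to the head of $\de$ along $L_e$ is at most $\tfrac12 w(\de)$ (and analogously for $z'$); whenever $B_{T_\de(q)}(\de) \cap B_{T_{\de'}(q)}(\de') \neq \emptyset$, the triangle inequality in $\mathcal{X}$ yields $\dist(z, z') \leq (T_\de(q) + \tfrac12 w(\de)) + (T_{\de'}(q) + \tfrac12 w(\de'))$. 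The simultaneous non-emptiness of these intersections over all pairs of directed edges holds w.h.p.\ by the argument of Lemma~\ref{lem-dist-Tu-Tv}, since conditional on $R$ each ball $B_{T_\de(q)}(\de)$ contains $\geq (d-1-o(1))q$ unmatched half-edges. Applying \eqref{eq-tau-decay-uniform-e-cont} at parameter $\ell' = \tfrac12 \log n + \tfrac12 \ell + O(\log\log n)$ and taking a union bound over the $2dn$ directed edges yields $T_\de(q) + \tfrac12 w(\de) \leq \tfrac12\bigl(1 + \tfrac{1}{d-2}\bigr)\log n + 7.5\log\log n + \ell/2$ uniformly in $\de$ with the probability claimed by \eqref{eq-diam-decay-cont}, and the inequality $\diam(\mathcal{X}) \leq 2\max_\de (T_\de(q) + \tfrac12 w(\de))$ completes the proof. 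The principal obstacle is the constraint $\lambda < 2$ imposed by the independent $\tfrac12 w(\de)$ term, which caps the Laplace decay below the natural exploration rate $d-1$ and is precisely what makes the $\mathrm{e}^{-2\ell}$ decay of \eqref{eq-tau-decay-uniform-e-cont} weaker than the $\mathrm{e}^{-d\ell}$ decay of \eqref{eq-tau-decay-uniform-u}; once the right value of $\lambda$ is settled, the remaining bookkeeping closely mirrors the discrete case.
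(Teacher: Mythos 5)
You follow the paper's route in outline (exploration from the head of $\de$ with $d-1$ half-edges, the tree-excess cases, independence of $w(\de)$ from the exploration, and the triangle inequality plus ball-intersection argument for the diameter), but there is a genuine gap in your treatment of rare edges, which you propose to ``set aside throughout'' as an $O(1/n)$ event. For \eqref{eq-tau-decay-uniform-e-cont}, conditioning on $\de$ being non-rare and bounding the rare case by its probability gives an additive $O(1/n)$ term with no decay in $\ell$; this is not dominated by the claimed right-hand side once $\ell\gtrsim\frac12\log n$ (where the RHS is $O(n^{-3/2})$). The paper instead runs the exploration for rare $\de$ as well -- from the reversed edge, with the extra $\frac12 w(\de)$ built into the definition \eqref{eq-t(q)-def} -- and takes $\lambda=1$ there, obtaining rate-$1$ decay with prefactor $O(1/n)$: that is exactly what the middle term $\frac{c}{n}\mathrm{e}^{-\ell}$ encodes. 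Your attribution of that term to the tree-excess-$1$ case is off (in the continuous setting Case~2 for a non-rare edge still has all rates at least $2$, so it contributes at rate $2$), and once you use up the middle term on Case~2 there is nothing left in the stated bound to absorb the rare edges. The gap is even more consequential for \eqref{eq-diam-decay-cont}: the expected number of rare directed edges is $\Theta(1)$, so with probability bounded away from $0$ such edges exist, and points lying on them must be covered by the diameter bound; discarding them breaks the w.h.p.\ statement. The paper's fix is explicit: when $\de_x$ is rare, replace $u_x$ by the other endpoint of $\de_x$ at a cost of at most $\frac12 w(\de_x)$, which the modified definition of $T_{\de_x}(q)$ accounts for, so that \eqref{eq-metric-dist(x,y)} remains valid.

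A secondary, fixable issue: your choice $\lambda=2-1/\log n$ (to keep $\E\,\mathrm{e}^{\lambda w(\de)/2}=2/(2-\lambda)$ finite) yields a tail of order $\mathrm{e}^{-(2-1/\log n)\ell}$, which exceeds $c\,\mathrm{e}^{-2\ell}$ by an unbounded factor when $\ell\gg\log n\log\log n$, so strictly it does not give the stated bound for all $\ell>0$. The cleaner route, as in the paper's discrete proof, is to peel off the boundedly many rate-$2$ contributions (the first step $\tau_1\sim\Exp(d-1)$ when $d=3$, and $\frac12 w(\de)\sim\Exp(2)$), take $\lambda=2$ on the remaining product whose rates are all at least $3$, and handle the peeled-off exponentials directly by their rate-$2$ tails, with the $7\log\log n$ slack absorbing the resulting polylogarithmic prefactors. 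Apart from these two points, your bookkeeping (the shifted rates, the union bound over directed edges at $\ell'=\frac12\log n+k$, and the doubling step) matches the paper's argument.
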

\begin{proof}
Consider the exploration process from the endpoint $y$ of the uniformly chosen edge $\de=(x,y)$, while disregarding $\de$. This process is identical to the process defined in the previous subsection except that it starts with $d-1$ half-edges rather than $d$ half-edges.

Letting $\tau_i$ be the time of the $i$'th exploration step, we have that there are at most $d-1+(d-2)i$ half-edges in $B_{\tau_i}$ out of $dn-2i$ half-edges in total. As before, the number of half-edges, introduced at time $\tau_{i+1}$ and connecting back to $B_{\tau_{i+1}}$, is stochastically dominated by a binomial variable
\[\Bin(d-1, \alpha)\mbox{ where $\alpha=\frac{d-1+(d-2)(i+1)}{dn-2i} \leq \frac{i +2}{n}$}\,.\]
Thus, we apply the same analysis of the tree excess of $B_{\tau_i}$ as in the proof of Theorem~\ref{thm-decay} (stochastically dominating it via a binomial $\Bin(d,(i+2)/n)$) to obtain the exact bounds of \eqref{eq-tx-B-tau-r}, as well as that $\P(R) \geq 1 - n^{-5}$ for $R$ defined in \eqref{eq-def-R}.

Examining the two cases for $\tx(B_{\tau_r})$, we now have that $\tau_{i+1}-\tau_i \preceq Y_i$ for i.i.d.\ exponential variables $Y_i$ defined as follows:
\begin{list}{\labelitemi}{\leftmargin=2em}

  \item \textbf{Case 1.} (The tree excess of $B_{\tau_r}$ is $0$ and the event $R$ holds)
   \begin{align*}
Y_0 &\sim \Exp(d-1)~,\quad Y_i \sim \Exp(d-1 + (d-2)i)   &(i=1,\ldots,r-1)\,,\\
Y_i &\sim \Exp(d-1+(d-2)i-2 i/\sqrt{r})&(i=r,r+1,\ldots)\,.
 \end{align*}
For $d\geq 3$ the rates of the above $Y_i$'s are all at least $2$.

  \item \textbf{Case 2.} (The tree excess of $B_{\tau_r}$ is $1$ and the event $R$ holds)

First assume $\de$ is not rare. In this case there are $d-1$ half-edges initially,
  the first step adds $d-2$ half-edges to the boundary ($d-1$ new ones at the cost of $1$ half-edge lost, as there are no loops or multiple edges), and similarly the second steps adds $d-2$ half-edges. Thus,
\begin{align*}
&Y_0 \sim \Exp(d-1)~,\quad Y_1 \sim \Exp(2d-3)\,,\\
&Y_i \sim \Exp(3d-7 + (d-2)(i-2))~\quad\qquad\qquad  (i=2,\ldots,r-1)\,,\\
&Y_i \sim \Exp(3d-7+(d-2)(i-2)-2 i/\sqrt{r})~\quad(i=r,r+1,\ldots)\,.
 \end{align*}
 Note that for $d\geq 3$ the rates of the above $Y_i$'s are again all at least $2$.
 In the special case where $\de$ is rare (occurring with probability $O(1/n)$),
 the analogous sequence of $Y_i$'s will feature a reduced rate of at least $1$.
\end{list}
It is then easy to verify that a choice of $\lambda = 2$ for the Laplace transform for a typical edge $\de$ and $\lambda=1$ when $\de$ is rare, plugged in the same calculations as in the proof of Theorem~\ref{thm-decay}, gives that
\begin{equation*} \P\left( T_\de(q) \geq \frac{\log n}{2(d-2)} + 7\log\log n + \ell \right) \leq
c \mathrm{e}^{-2 \ell} + \frac{c}n \mathrm{e}^{-\ell} + 2n^{-3/2}
\end{equation*}
(the middle term corresponded to a rare edge $\de$; otherwise, Cases~1,2 both give a decay-rate of $2$).

Consider an edge $\de$ that is not rate.
Crucially, $w(\de)$ is independent of $T_{\de}(q)$ by definition, and as $w(\de) \sim \Exp(1)$ (and so $\frac12w(\de)\sim \Exp(2)$ has decay rate 2) we immediately obtain \eqref{eq-tau-decay-uniform-e-cont}.
Alternatively, when $\de$ is rare, our modified definition of $T_{\de}(q)$ again verifies \eqref{eq-tau-decay-uniform-e-cont} (this time both $w(\de)$ and the growth of $B$ have rate $1$).

Choosing $\ell = \frac12 \log n + k$ and taking a union bound over $\de$, we have that
\begin{equation}\label{eq-tau(de)-uniform} \left\{ T_\de(q) + \tfrac12 w(\de) \leq \big(\tfrac1{2(d-2)} + \tfrac12\big)\log n + 7\log\log n + k \right\} \mbox{ for every $\de$}
\end{equation}
with probability at least $1 - c \mathrm{e}^{-2 k} - (2+c\mathrm{e}^{-k}) n^{-1/2}$.

To conclude \eqref{eq-diam-decay-cont} from \eqref{eq-tau(de)-uniform}, argue as follows. For two points $x,y\in\mathcal{X}$, write $u_x,u_y$ for their closest points in $G$ respectively, and let $\de_x,\de_y$ denote the directed edges containing $x,y$ and ending at $u_x,u_y$ respectively.
For simplicity, assume first that neither of these edges is rare. Clearly,
\[ \dist(x,y) \leq \dist(u_x,u_y) + \frac12 w(\de_x) + \frac12w(\de_y) \,.\]
Now, by Lemma~\ref{lem-dist-Tu-Tv}, w.h.p.\ we have $\dist(u,v) \leq T_u(q) + T_v(q)$ for all $u,v$. Furthermore, for any directed edge $\de = (u,v)$, as $B_t(\de) \subset B_t(v)$ we get that $T_\de(q) \geq T_{v}(q)$. Altogether, w.h.p.\
\begin{equation}
  \label{eq-metric-dist(x,y)}
  \dist(x,y) \leq T_{\de_x}(q) + \frac12 w(\de_x) + T_{\de_y}(q) + \frac12w(\de_y)\,.
\end{equation}
We claim that the above inequality remains valid if one or more of the edges $\de_x,\de_y$ is rare. For instance,
if $\de_x$ is rare we could replace $u_x$ by the other endpoint of $\de_x$ at a cost of at most $\frac12 w(\de_x)$, which is precisely accounted for in the modified definition of $T_{\de_x}(q)$.
The exponential decay of the diameter, as stated in \eqref{eq-diam-decay-cont}, now immediately follows from \eqref{eq-tau(de)-uniform}.
\end{proof}

\begin{proof}[\emph{\textbf{Proof of Theorem~\ref{thm-diameter-cont}}}]
The upper bound on $\diam(\mathcal{X})$ follows directly from the exponential decay estimate in \eqref{eq-diam-decay-cont}, e.g., by
taking $\ell = \log\log n$.

The proof of the lower bound will follow from essentially the same arguments used to prove Theorem~\ref{thm-diameter}. Here,
instead of defining a ``good'' vertex, we call an edge ``good'' if its weight is larger than
$ D =\log n - \log\log n$,
and obtain that the probability that a given edge $e$ is good is $\frac{\log n}{n}$. The same second moment argument that showed that $R$, the number of pairs of distinct good vertices, is at least $\frac14 \log^2 n$ w.h.p., now yields the same estimate on the number of pairs of distinct good edges.

The argument in Theorem~\ref{thm-diameter} then proceeded with an application of Lemma~\ref{lem-typical-dist-lower}, which bounds the probability that two uniformly chosen vertices $u,v$ have intersecting neighborhoods $B'_{t_0}(u)$ and $B'_{t_0}(v)$ for a prescribed $t_0$ (where $B'_t(x)$ consists of vertices that are of distance at most $t$ from $N_G(x)$, the neighbors of $x$ in $G$). We now note that the statement of this lemma holds also for any uniformly chosen points $x,y\in\mathcal{X}$, when $N_G(x)$ still stands for the neighbors of $x$ in the graph $G$ (i.e., the endpoints of the incident edges). To see this, simply replace the $b$ by $2(d-1)-(d-2)$ in the proof of that lemma, accounting for the $2(d-1)$ initial half-edges in the exploration process. Since the value of $b$ does not play a role in the proof (as long as it is a constant), the proof holds without any further changes (for the same value of $t_0$).

Therefore, as before we may deduce that the expected number of pairs of good edges whose distance is at most $2t_0$ is $o(\log^2 n)$, and so w.h.p.\ (using Markov's inequality) there exists a pair of good edges, $e,f$, whose distance in $\mathcal{X}$ is at least $2t_0$.
The proof is now concluded by choosing the middle points in $e,f$ to obtain a distance of
\[ 2\cdot \frac{D}2 + 2 t_0 = \Big( 1 + \frac{1}{d-2}\Big)\log n -7\log\log n\,,\]
as required.
\end{proof}

\section{The young giant component}\label{sec:young-giant}

In this section, we focus on the giant component in the regime $\epsilon^3 n \to \infty$ and $\epsilon = o(n^{-1/4})$.
In order to relate the results of the previous section (concerning diameters of weighted random regular graphs) to this setting, we apply the main result of the companion paper \cite{DKLP}, which provides a complete and tractable description of $\GC$.
\begin{theorem}[\cite{DKLP}*{Theorem 1}]\label{thm-struct}
Let $\GC$ be the largest component of the random graph $\cG(n,p)$ for $p = \frac{1 + \epsilon}{n}$, where $\epsilon^3 n\to \infty$ and $\epsilon = o(n^{-1/4})$. Then $\GC$ is one-sided contiguous to the model $\tGC$, constructed in 3 steps as follows:
\begin{enumerate}[1.\!]
  \item\label{item-struct-base} Let $Z \sim \mathcal{N}\left(\tfrac23\epsilon^3 n, \epsilon^3 n\right)$, and 
      select a random 3-regular multigraph $\K$ on $N = 2\lfloor Z \rfloor$ vertices.
  \item\label{item-struct-edges} Replace each edge of $\K$ by a path, where the path lengths are i.i.d.\ $\Geom(\epsilon)$.
  \item\label{item-struct-bushes} Attach an independent $\mathrm{Poisson}(1-\epsilon)$-Galton-Watson tree to each vertex.
\end{enumerate}
That is, $\P(\tGC \in \mathcal{A}) \to 0$ implies $\P(\GC \in \mathcal{A}) \to 0$
for any set of graphs $\mathcal{A}$.
\end{theorem}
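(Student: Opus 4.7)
The plan is to build $\GC$ explicitly in three phases that mirror the construction of $\tGC$, and show that the resulting law matches $\tGC$ closely enough for one-sided contiguity. The decomposition is the standard one: the kernel $\K$ consists of all 2-core vertices of degree $\geq 3$, the 2-core $\TC$ is obtained from $\K$ by subdividing its edges with degree-2 vertices, and $\GC$ itself is obtained by hanging ``bushes'' off each 2-core vertex. The task is to identify the conditional law at each stage.

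First I would analyze the 2-core. Using a branching-process exploration of $\cG(n,p)$ together with a second-moment / martingale concentration argument, show that $|V(\TC)| = (2+o(1))\epsilon^2 n$ and that the number of vertices of degree exactly $3$ in $\TC$ is $(4/3+o(1))\epsilon^3 n$, concentrated around this mean up to fluctuations of order $\sqrt{\epsilon^3 n}$ (this is what produces the Gaussian in Step~\ref{item-struct-base}). The restriction $\epsilon = o(n^{-1/4})$ is used precisely here: the expected number of degree-$\geq 4$ vertices in the 2-core is $O(\epsilon^5 n)=o(\epsilon)$, so w.h.p.\ every kernel vertex has degree exactly $3$, and $\K$ is a $3$-regular multigraph on an even number $N = 2\lfloor Z\rfloor$ of vertices.

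Next, condition on the degree sequence of $\TC$ (equivalently on $N$ and $|V(\TC)|$). A configuration-model argument shows that, conditional on this degree sequence, the 2-core is a uniformly chosen multigraph with that degree sequence. Contracting each maximal 2-path then yields a uniform random $3$-regular multigraph on $N$ vertices, matching Step~\ref{item-struct-base} of $\tGC$. To get the path lengths, observe that given $N$ and $|V(\TC)|$, the $|V(\TC)| - N$ degree-2 vertices are uniformly distributed among the $3N/2$ edges of the kernel, which is a composition problem whose marginal is well approximated by i.i.d.\ $\Geom(\epsilon)$'s once one checks that $\epsilon \cdot |V(\TC)| / (3N/2) = 1 + o(1)$ and that the implicit conditioning on the total length is asymptotically negligible by a local CLT. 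This is the step I expect to be the main obstacle: showing that the joint law of the path lengths is contiguous to i.i.d.\ $\Geom(\epsilon)$ requires sharp control on the size of the 2-core and a careful large-deviation / local-limit argument for Poisson composition.

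Finally, for the bushes of Step~\ref{item-struct-bushes}, use the discrete duality principle: conditional on $\TC$, every vertex of $\GC \setminus \TC$ lies in a tree rooted at a 2-core vertex, and this tree has the law of a cluster in a subcritical Erd\H{o}s--R\'enyi graph on the remaining vertices. Since the remaining edge density is $p(n-o(n))/n = 1-\epsilon + o(\epsilon)$ after the $(1-2\epsilon+o(\epsilon))n$-sized dual adjustment, each such tree is contiguous to an independent $\mathrm{Poisson}(1-\epsilon)$-Galton-Watson tree. Combining the three stages, one obtains a coupling under which $\GC$ and $\tGC$ agree outside an event whose $\tGC$-probability tends to $0$; since the event ``$\GC \neq \tGC$ in the coupling'' is measurable with respect to $\tGC$, this yields precisely the stated one-sided contiguity.
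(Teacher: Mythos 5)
This statement is not proved in the paper you were given: it is quoted verbatim as Theorem 1 of the companion paper \cite{DKLP}, and the present paper uses it as a black box. Your sketch does follow the same anatomical decomposition (kernel, 2-core via 2-paths, attached trees) that underlies the companion paper's result, so the outline is sensible; but as a proof it defers exactly the parts that constitute the theorem. The Gaussian law of $Z$, the uniformity of the kernel as a random $3$-regular \emph{multigraph} (including the treatment of loops and multiple edges created by contracting 2-paths, and why the configuration-model weighting is the right one for the 2-core of $\cG(n,p)$ conditioned on its degree sequence), and above all the claim that the joint law of the $3N/2$ path lengths is contiguous to i.i.d.\ $\Geom(\epsilon)$ despite the conditioning on the total 2-core size, are each asserted or flagged as ``the main obstacle'' rather than argued. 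A local CLT for the composition is indeed the right kind of tool, but invoking it in one sentence, uniformly in the regime $\epsilon^3 n\to\infty$ arbitrarily slowly, is precisely the technical content one would have to supply; the same goes for the duality argument for the $\mathrm{Poisson}(1-\epsilon)$ trees, where ``contiguous to independent PGW trees'' hides the joint statement over all $\Theta(\epsilon^2 n)$ roots, not just a single bush.

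There is also a genuine conceptual error in your concluding step. You propose to finish by exhibiting a coupling under which $\GC=\tGC$ outside an event of vanishing $\tGC$-probability; such a coupling would force the total-variation distance between the two laws to vanish, which is strictly stronger than the stated one-sided contiguity and is not what your intermediate steps (which are themselves contiguity-type approximations, e.g.\ Gaussian approximation of a lattice count, i.i.d.\ approximation of an exchangeable composition) deliver. Moreover, the justification that ``the event $\GC\neq\tGC$ in the coupling is measurable with respect to $\tGC$'' is not meaningful: one-sided contiguity is a statement about sequences of events $\mathcal{A}$ under the two marginal laws, and the correct route is a likelihood-ratio (Radon--Nikodym) tightness argument, or a summation of conditional error bounds over the conditioning variables, not a high-probability coupling. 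A minor quantitative slip worth fixing: the expected number of 2-core vertices of degree at least $4$ is of order $\epsilon^4 n$, not $\epsilon^5 n$, and it is exactly the hypothesis $\epsilon=o(n^{-1/4})$ that makes $\epsilon^4 n=o(1)$ and hence the kernel w.h.p.\ 3-regular.
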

In the above, a $\mathrm{Poisson}(\mu)$-Galton-Watson tree (or a PGW-tree for short)
is the family tree of a Galton-Watson branching process with offspring distribution $\mathrm{Poisson}(\mu)$, and $\mathcal{N}(\mu,\sigma^2)$ stands for the Normal distribution with mean $\mu$ and variance $\sigma^2$.

\subsection{The 2-core and its kernel} In light of the above theorem, the kernel $\K$ of the giant component in the regime of $p=\frac{1+\epsilon}n$ for $\epsilon^3 n\to\infty$ and $\epsilon=o(n^{-1/4})$ can be regarded a random 3-regular graph $G \sim \cG(N,3)$. The 2-core $\TC$ is then obtained by replacing the edges of this graph by 2-paths (i.e., paths whose interior vertices all have degree 2) of lengths i.i.d.\ geometric with mean $1/\epsilon$.

\begin{proof}
  [\emph{\textbf{Proof of Theorem~\ref{mainthm-2} for the regime $\epsilon=o(n^{-1/4})$}}]
Let $G \sim \cG(N , 3)$ be a random 3-regular graph with i.i.d.\ $\Exp(\lambda)$ edge weights $\{w(e):w\in E(G)\}$, where
$\mathrm{e}^{-\lambda} = 1-\epsilon$. Further let $H$ be the unweighted graph obtained by taking the underlying graph of $G$, and replacing each of its edges by paths of length i.i.d.\ geometric variables with mean $1/\epsilon$. Let $K$ denote the subset of the vertices of $H$ of degree at least $3$.

By Theorem~\ref{thm-struct}, for $N$ as defined as in Step~\ref{item-struct-base} (which in particular satisfies $N = (\tfrac43+o(1))\epsilon^3 n$ w.h.p.), $H$ corresponds to $\TC$, the 2-core of the giant component of $\cG(n,p)$, and the graph $G$ corresponds to its kernel $\K$.

We can clearly couple $G$ and $H$ such that each 2-path $P$ in $H$, corresponding to some edge $e \in E(G)$, would satisfy $\left||P|-w(e)\right| \leq 1$.

By Theorem~\ref{thm-decay}, the diameter of the weighted graph $G$ is w.h.p.
\[ \diam(G) = \big(\tfrac5{3}+o(1)\big)(1/\lambda)\log N\,,\]
and by Lemma~\ref{lem-dist-path-len} the path achieving it consists of $O(\log N)$ edges. Hence, recalling that $1/\lambda = (1+o(1))(1/\epsilon) \to \infty$, the distance between any two vertices $u,v\in K$ in the graph $H$ differs from their distance in $G$ by at most $O(\log N) = o(\diam(G))$, and so w.h.p.
\[ \max_{u,v\in K} \dist_{H}(u,v) = \big(\tfrac5{3}+o(1)\big)(1/\epsilon)\log (\epsilon^3 n)\,.\]
Furthermore, by the coupling of $G$ and $H$, given the metric graph $\mathcal{X}(G)$ we clearly have that distance between two given points $x,y\in\mathcal{X}$ is up to a difference of $2$ the distance between two other points $u,v\in H$ (simply take the closest points to $x,y$ in the subdivision of the corresponding edges), and vice versa. Since we know by Theorem~\ref{thm-diameter-cont} that w.h.p.\
\[ \diam(\mathcal{X}) = (2+o(1))(1/\lambda)\log N\,,\]
we can now deduce that w.h.p.
\[\diam(H) = (2+o(1))(1/\epsilon)\log (\epsilon^3 n)\,,\]
as required.
\end{proof}

\subsection{The diameter of the giant component}
We next wish to prove Theorem~\ref{mainthm-1}, which establishes the asymptotics of the diameter of the giant component.

The next lemma estimates the diameter of a Poisson-Galton-Watson tree. Throughout the proof, let $0 < \mu < 1$ be
some function of $n$ satisfying \[\mu = 1 - \epsilon + O(\epsilon^2)\,.\]
\begin{lemma}\label{lem-PGW-diameter}
Let $T$ be a $\mathrm{PGW}(\mu)$-tree for $\mu$ as above, and let $L_k$ be the $k$-th level of $T$. For any $k \geq 1/\epsilon$ we have
$ \P(L_k \neq \emptyset) = \Theta\left(\epsilon \exp\left[-k(\epsilon + O(\epsilon^2))\right]\right)$.
\end{lemma}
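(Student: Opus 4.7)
The approach is to analyze directly the recursion satisfied by $g_k \deq \P(L_k \neq \emptyset)$. Since $g_k = 1 - f^{(k)}(0)$ for the offspring generating function $f(s) = e^{-\mu(1-s)}$, we have
\[ g_{k+1} = 1 - e^{-\mu g_k} = \mu g_k - \tfrac{\mu^2}{2}g_k^2 + O(g_k^3). \]
Setting $h_k \deq 1/g_k$ and expanding $1/(1-x)$ yields the approximately linear recursion
\[ h_{k+1} = \tfrac{1}{\mu} h_k + \tfrac{1}{2} + O(g_k), \]
which iterates (after multiplication by $\mu^k$) to
\[ h_k\, \mu^k = 1 + \tfrac{1}{2}\sum_{j=1}^{k} \mu^{j} + O\Big(\sum_{j=0}^{k-1} g_j \mu^{j+1}\Big). \]

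The plan is to show that the main term above is $\Theta(1/\epsilon)$ while the error sum is of smaller order; the conclusion $g_k = \mu^k/(h_k\mu^k) = \Theta(\epsilon\, \mu^k) = \Theta(\epsilon \exp[-k(\epsilon + O(\epsilon^2))])$ then follows. The geometric main sum equals $\tfrac{\mu(1 - \mu^k)}{1 - \mu}$; since $1 - \mu = \epsilon + O(\epsilon^2)$ and $k \geq 1/\epsilon$ forces $\mu^k \leq (1-\epsilon+O(\epsilon^2))^{1/\epsilon} \leq e^{-1+o(1)}$, this is indeed of order $1/\epsilon$, bounded away from $0$ and from $\infty$ uniformly in $\epsilon$.

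The main obstacle is bounding the error sum, since the trivial estimate $g_j \leq 1$ gives only $O(1/\epsilon)$, matching the main term rather than being negligible beside it. The step that closes the gap is a preliminary polynomial decay bound $g_j \leq 3/j$ for all $j\geq 1$. This will follow from the elementary inequality $1 - e^{-x} \leq x - x^2/3$ for $x \in [0,1]$, which applied with $x = \mu g_j$ yields $g_{j+1} \leq \mu g_j(1 - \mu g_j /3)$, and hence $h_{j+1} \geq h_j/\mu + 1/3 \geq h_j + 1/3$. Iterating from $h_0 = 1$ gives $h_j \geq 1 + j/3$, i.e.\ $g_j \leq 3/(j+3)$. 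Substituting back into the error term produces
\[ \sum_{j\geq 1} g_j\, \mu^{j+1} = O\Big(\sum_{j\geq 1} \mu^j/j\Big) = O\big(\log(1/(1-\mu))\big) = O(\log(1/\epsilon)), \]
which is indeed of smaller order than the $\Theta(1/\epsilon)$ main term, completing the argument.
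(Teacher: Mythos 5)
Your proof is correct, but it takes a genuinely different route from the paper. The paper sandwiches the $\mathrm{PGW}(\mu)$ law between binomial Galton--Watson trees $\Bin(b,\mu/b)$, views those as root clusters of percolation on the $b$-ary tree, invokes Lyons' effective-resistance bounds $\tfrac{1}{1+R_k}\le\P(L'_k\neq\emptyset)\le\tfrac{2}{1+R_k}$, computes $R_k$ as an explicit geometric series, and lets $b\to\infty$; the two-sided estimate is thus delegated to a cited theorem, and the geometric series playing the role of your main term appears there as the resistance. You instead analyze the survival recursion $g_{k+1}=1-\mathrm{e}^{-\mu g_k}$ directly through the classical $h_k=1/g_k$ transformation (Kolmogorov-type near-critical asymptotics), and the one genuinely non-routine ingredient --- controlling the error sum $\sum_j g_j\mu^{j+1}$ --- is handled by your bootstrap bound $g_j\le 3/(j+3)$, obtained from $1-\mathrm{e}^{-x}\le x-x^2/3$ on $[0,1]$, which correctly reduces the error to $O(\log(1/\epsilon))=o(1/\epsilon)$; the restriction $k\ge 1/\epsilon$ enters exactly where it should, to keep $1-\mu^k$ bounded below so the main term is $\Theta(1/\epsilon)$ from both sides. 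What each approach buys: the paper's argument is short modulo the external resistance theorem and a limit in $b$, while yours is self-contained and elementary, with constants visibly uniform in $\epsilon$. One small point to tighten in a write-up: the step $h_{k+1}=h_k/\mu+\tfrac12+O(g_k)$ should be stated with a constant uniform over all $g_k\in(0,1]$ (not merely as $g_k\to0$, since $g_k$ is of constant order for small $k$); this is immediate, e.g.\ because $x\mapsto \frac{1}{1-\mathrm{e}^{-x}}-\frac1x-\frac12$ is bounded by a constant times $x$ on $(0,1]$, but it deserves a sentence.
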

\begin{proof}
Let $T'$ be a Galton-Watson tree with a Binomial offspring distribution $\Bin(b, \mu/b)$.  Then $T'$ is precisely the open cluster containing the root after percolating on a $b$-ary tree $T_b$, where the percolation probability is $\mu/b$. Let
$L'_k$ be the $k$-th level of $T'$. We will use the next lemma which gives a sharp estimate for the probability that $L'_k$ is non-empty.

\begin{lemma}[\cite{Lyons}*{Theorem 2.1}, restated]\label{lem-percolation-resistance}
Assign each edge e from level $h-1$ to level $h$ in $T_b$ the edge resistance $r_e = (1-\frac{\mu}b)(\frac{\mu}b)^{-h}$. Let $R_k$ be the effective
resistance from the root to level $k$ of $T_b$. Then,
$$\frac{1}{1 + R_k}
\leq \P(L'_k \neq \emptyset)\leq \frac{2}{1 + R_k} ~.$$
\end{lemma}
In our case, the resistance $R_k$ satisfies (see, e.g., \cite{Peres}*{Example 8.3})
\begin{align*}
R_k =\sum_{i=1}^k \frac{(1-\frac{\mu}{b}) (\frac{\mu}{b})^{-i}}{b(b-1)^{i-1}} = \frac{b-1}{b-\mu}\cdot\frac{\big[(1 + \frac{1}{b-1})/\mu\big]^{k+1} -
1}{\big[(1 + \frac{1}{b-1})/\mu\big] -1 }\,.
\end{align*}
Note that for $k \geq 1/\epsilon$ we have
\[ \lim_{b\to \infty}R_k = \frac{(1/\mu)^{k+1} -1}{(1/\mu)-1} =  \Theta\left(\epsilon^{-1}\exp\left[ k(\epsilon + O(\epsilon^2))\right]\right)\,.\]
Applying Lemma \ref{lem-percolation-resistance}, we obtain that
\[\frac{1}{1 + R_k}
\leq \P(L'_k\neq \emptyset)\leq \frac{2}{1 + R_k}\,.\]
Letting $b\to \infty$ and using the fact that $\Bin(b, \frac{\mu}{b})$ converges to a $\mathrm{Poisson}(\mu)$ distribution, we obtain that for any $k\geq 1/\epsilon$,
$$ \P(L_k \neq \emptyset) = \Theta\left(\epsilon \exp\left[-k(\epsilon + O(\epsilon^2))\right]\right)\,,$$
as required.
\end{proof}

We are now ready to prove the main result.
\begin{proof}[\emph{\textbf{Proof of Theorem~\ref{mainthm-1} for the regime $\epsilon=o(n^{-1/4})$}}]
For $N$ as defined in Theorem~\ref{thm-struct}, let $G \sim \cG(N , 3)$ be a random 3-regular graph with i.i.d.\ $\Exp(\lambda)$ edge weights, denoted by $\{w(e):w\in E(G)\}$, where
$\mathrm{e}^{-\lambda} = 1-\epsilon$.
Again, let $H$ be the unweighted graph obtained by taking the underlying graph of $G$, and replacing each of its edges by paths of length i.i.d.\ geometric variables with mean $1/\epsilon$. 

In what follows, we will shift between the shortest distances in the metric graph $\mathcal{X}(G)$ and those in the weighted graph $H$. Since we have
\[ 1/\lambda = 1 / \log(1/(1 - \epsilon)) = (1/\epsilon) + O(1)\,,\] Lemma~\ref{lem-dist-path-len}, together with the aforementioned coupling between the two models, implies that this shift will only cause an error of $O(\log (\epsilon^3 n))$, which is easily absorbed in our estimate for the diameter.

Set $\delta > 0$. and consider $m = \Theta(\epsilon^2 n)$ i.i.d.\ PGW($1-\epsilon$)-trees.
By Lemma~\ref{lem-PGW-diameter}, the probability that a given such tree will have height at least
\[ h^- = (1-\delta) (1/\epsilon) \log (\epsilon^3 n) \]
is, for some $c=c(\delta)> 0$, at least
\[c \epsilon (\epsilon^3 n)^{-(1-\delta)(1+ O(\epsilon))} = c \epsilon (\epsilon^3 n)^{-1+\delta+o(1)} \deq \zeta\,.\]
Therefore, standard estimates for the binomial variable $\Bin(m,\zeta)$ (whose mean is $m \zeta = (\epsilon^3 n)^{\delta-o(1)}$)
imply that there exist at least $2$ such trees w.h.p.

Let $H_u$ denote the height of the PGW-tree attached to a vertex $u$ in the 2-core. By the above discussion, the two vertices $u,v$ with largest $H_u,H_v$ have $H_u,H_v \geq h^-$, and clearly they are uniformly distributed among the vertices of $\TC$ (by the definition of our model). As such, Lemma~\ref{lem-typical-dist-lower} asserts that their distance in $G$ is w.h.p.\ at least $2t_0 = (1+o(1))\log N$ (neglecting the distance to their nearest kernel vertices in this lower bound) and re-scaling, we obtain that their distance is at least $(1+o(1))(1/\epsilon)\log(\epsilon^3 n)$ in the 2-core.
Altogether, the distance between a level-$H_u$ leaf of the PGW-tree of $u$ and a level-$H_v$ leaf of the PGW-tree of $v$ is at least
\[ (3-2\delta+o(1))(1/\epsilon)\log(\epsilon^3 n)\,,\]
and letting $\delta\to 0$ completes the lower bound.

For the upper bound, let $B = \{u \in \TC: H_u \geq 1/\epsilon\}$. Applying Lemma~\ref{lem-PGW-diameter} and Markov's inequality (as $\E|B| = O(\epsilon^3n)$) we infer that w.h.p.\
\begin{equation}
  \label{eq-B-bound}
  |B| \leq  \epsilon^3 n \log \log (\epsilon^3 n)\,.
\end{equation} Recall that the attached trees are independent of the $2$-core and hence $B$ is independent of the structure of the $2$-core. Moreover, for any $u$,
\[\P\left(H_u \geq 1/\epsilon + \ell \given u \in B\right) = O\big(\mathrm{e}^{-(1 + O(\epsilon))\epsilon \ell}\big)\,,\]
since we have $\P(u\in B) = \Theta(\epsilon)$.

Now, let $u'$ be the closest kernel point to $u$ and let $\de_u$ be the kernel edge incident to $u$ and ending at $u'$.
Define $T_u = T_{\de_u}(q) + \frac12 w(\de_u)$, where $T_{\de_u}(q)$ is as given in~\eqref{eq-t(q)-def}, i.e.\ the minimum time $t$ at which $B_t(\de_u)$ reaches size $q =2\sqrt{N\log N}$ (adjusted by $\frac12w(\de_u)$ in the case of rare edges).

For $\delta > 0$, set
\[ t_n = \left(\tfrac12+\delta\right) (1/\epsilon) \log (\epsilon^3 n)\,.\]

Since $H_u$ and $T_u$ are independent, using \eqref{eq-tau-decay-uniform-e-cont} we deduce that for any $u$,
\begin{align*}
\P( H_u \geq \ell_1\given u \in B~,~T_u) &\leq O(\mathrm{e}^{-(1 + O(\epsilon))\epsilon\ell_1}) \,,\\
\P( T_u \geq t_n + \ell_2\given u \in B~,~H_u) &\leq
O(\mathrm{e}^{-(1 + O(\epsilon))\epsilon\ell_2})+ O(N^{-3/2})\,.
\end{align*}
We next wish to bound the upper tail of $H_u + T_u$. To this end,
let $W$ be a random variable with $\P(W \geq x) =
\mathrm{e}^{-(1+O(\epsilon))\epsilon x}$ and $Z$ be a random
variable on $\{0, n\}$ with $\P(Z = n) = O(N^{-3/2})$. Pick
independent variables $W_1$ and $W_2$ distributed as $W$ and suppose
$Z$ is independent of $W_1, W_2$.
Observe that our previous results give that
\begin{equation}
  \label{eq-Hu-Tu-dom}
  (H_u \mid u\in B~, ~T_u) \preceq W_1 \mbox{ and } (T_u - t_n \mid u\in B~, ~H_u) \preceq W_2 + Z\,.
\end{equation}
By our assumption on $W$,
\[\P(\mathrm{e}^{(1-\delta)\epsilon W} \geq x) = \P(W \geq \log x/((1-\delta)\epsilon)) \leq x^{-\frac{1+O(\epsilon)}{1-\delta}}\]
and therefore $\E\mathrm{e}^{(1-\delta)\epsilon W} \leq O(1/\delta)$
assuming $\epsilon = o(\delta)$. We now obtain that
$\E\mathrm{e}^{(1-\delta)\epsilon (W_1+W_2)} \leq O(1/\delta^2)$ and
thus $\P(W_1 + W_2 \geq x) \leq O(\mathrm{e}^{-(1-\delta)\epsilon x}/\delta^2)$.
This implies that
$$\P(W_1 + W_2 + Z \geq x ) \leq O(\mathrm{e}^{-(1-\delta)\epsilon x}/\delta^2) + O(N^{-3/2})\,.$$
and plugging this in~\eqref{eq-Hu-Tu-dom} we conclude that
\[ \P( H_u + T_u \geq t_n + \ell\given u \in B) \leq O(\mathrm{e}^{-(1 -\delta)\epsilon\ell}/\delta^2) + O( N^{-3/2})~. \]
Taking \[ \ell = (1+2\delta)(1/\epsilon) \log(\epsilon^3 n)\] and
recalling the bound \eqref{eq-B-bound} on $|B|$, we conclude that
w.h.p.\ every $u \in B$ satisfies
\[H_u+T_u \leq \big((\tfrac{3}{2}+3\delta)\epsilon^{-1}\log (\epsilon^3 n)\big)\,.\]
It remains to treat vertices not in $B$. Again applying \eqref{eq-tau-decay-uniform-e-cont}, w.h.p.\ we have
$T_u \leq (1+\delta)(1/\epsilon) \log(\epsilon^3 n)$ for all $u\in \TC$, and thus for all $u \in \TC \setminus B$,
\[H_u +T_u \leq (1/\epsilon) + (1+\delta)(1/\epsilon) \log(\epsilon^3 n)\,.\]

Altogether, we conclude that w.h.p.\ every $u\in \TC$ satisfies
\[H_u+T_u \leq \big((\tfrac{3}{2}+3\delta)(1/\epsilon) \log (\epsilon^3 n)\big)\,,\]
and Eq.~\eqref{eq-metric-dist(x,y)} now concludes the proof of the upper bound.
\end{proof}

\section{Diameters in the supercritical giant component}\label{sec:giant}

The goal of this section is to extend the proofs of Theorem~\ref{mainthm-1},\ref{mainthm-2}, provided in the previous section for the special case of $\epsilon=o(n^{-1/4})$, to any $\epsilon=o(1)$.

The proofs follow from essentially the same arguments, by replacing Theorem~\ref{thm-struct} with its following more general form:

\begin{theorem}[\cite{DKLP}*{Theorem 2}]\label{thm-struct-gen}
Let $\GC$ be the largest component of $\cG(n,p)$ for $p = \frac{1 + \epsilon}{n}$, where $\epsilon^3 n\to \infty$ and $\epsilon\to 0$. Let $\mu<1$ denote the conjugate of $1+\epsilon$, that is,
$\mu\mathrm{e}^{-\mu} = (1+\epsilon) \mathrm{e}^{-(1+\epsilon)}$. Then $\GC$ is one-sided contiguous to the following model $\tGC$:
\begin{enumerate}[\indent 1.]
  \item\label{item-struct-gen-degrees} Let $\Lambda\sim \mathcal{N}\left(1+\epsilon - \mu, \frac1{\epsilon n}\right)$ and assign
  i.i.d.\  variables $D_u \sim \mathrm{Poisson}(\Lambda)$ ($u \in [n]$) to the vertices, conditioned that $\sum D_u \one_{D_u\geq 3}$ is even.

  Let $N_k = \#\{u : D_u = k\}$ and $N= \sum_{k\geq 3}N_k$. Select a random multigraph $\K$ on $N$ vertices, uniformly among all graphs with $N_k$ vertices of degree $k$ for $k\geq 3$.
  \item\label{item-struct-gen-edges} Replace the edges of $\K$ by paths of lengths i.i.d.\ $\Geom(1-\mu)$. 
  \item\label{item-struct-gen-bushes} Attach an independent $\mathrm{Poisson}(\mu)$-Galton-Watson tree to each vertex.
\end{enumerate}
That is, $\P(\tGC \in \mathcal{A}) \to 0$ implies $\P(\GC \in \mathcal{A}) \to 0$
for any set of graphs $\mathcal{A}$.
\end{theorem}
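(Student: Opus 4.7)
The plan is to decompose the giant component into three nested layers---the attached bushes, the 2-core $\TC$, and the kernel $\K$---and analyze each layer via branching-process duality combined with a configuration-model argument. I would begin by invoking the classical supercritical/subcritical duality: since $\cG(n,p)$ with $p=(1+\epsilon)/n$ has local structure mimicking a $\mathrm{PGW}(1+\epsilon)$ tree, a subtree of the exploration conditioned on extinction behaves as a $\mathrm{PGW}(\mu)$ tree with $\mu \mathrm{e}^{-\mu} = (1+\epsilon)\mathrm{e}^{-(1+\epsilon)}$. After identifying $\TC$ by iteratively peeling leaves, the residual bushes hanging off the 2-core vertices should be (approximately) i.i.d.\ $\mathrm{PGW}(\mu)$-trees, matching Step~\ref{item-struct-gen-bushes}.

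Next I would analyze the kernel degree sequence. Each vertex $u$ of the original graph has $\mathrm{Poisson}(1+\epsilon)$-many neighbors, and by Poisson thinning one decomposes this count into a $\mathrm{Poisson}(1+\epsilon-\mu)$ number of ``alive-side'' neighbors (leading into the surviving 2-core) and a $\mathrm{Poisson}(\mu)$ number of ``dead-side'' neighbors (leading into the finite bushes). A kernel vertex is then precisely a vertex whose alive-side degree is at least $3$; this produces the i.i.d.\ Poisson degrees $D_u$ of Step~\ref{item-struct-gen-degrees}, with the parity condition supplied by the handshake lemma. The random mean $\Lambda$ with variance $1/(\epsilon n)$ absorbs CLT-scale fluctuations in the total number of alive-side half-edges, which are of order $\sqrt{n/\epsilon}$ and mirror the classical fluctuations in $|\GC|$.

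For Step~\ref{item-struct-gen-edges}, one shows that a 2-path arises as a chain of vertices whose alive-side degree is exactly $2$, terminating when a vertex of alive-side degree $\geq 3$ is reached. At each step of the chain the probability of terminating equals $\P(\mathrm{Poisson}(1+\epsilon-\mu) \geq 3 \mid \mathrm{Poisson}(1+\epsilon-\mu) \geq 2)$, which I would compute to be $1-\mu+o(\epsilon)$, yielding the $\mathrm{Geom}(1-\mu)$ law of the 2-path lengths. Conditional on the kernel degree sequence $\{N_k\}$ and the path lengths, the kernel multigraph is then uniform over all multigraphs with the prescribed degrees by exchangeability of half-edge labels, giving the configuration-model distribution stated in Step~\ref{item-struct-gen-degrees}.

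The main obstacle will be upgrading distributional agreement to one-sided contiguity: one must verify that $\P(\GC \in \mathcal{A})$ is bounded by an absolute constant times $\P(\tGC \in \mathcal{A})$ (up to $o(1)$), uniformly over events $\mathcal{A}$, throughout the entire range $\epsilon^3 n \to \infty$ with $\epsilon \to 0$. I would handle this via a Radon--Nikodym comparison: conditioning both laws on the common sufficient statistics (the kernel degree sequence and the path lengths), the densities against counting measure on graphs are tractable in both models, and one bounds the ratio using sharp Poisson and local CLT approximations. Effective control of these approximations in the regime where $\epsilon^3 n$ grows arbitrarily slowly---where the relevant parameter $1+\epsilon-\mu \sim 2\epsilon$ is itself vanishing---is the most delicate ingredient and is precisely what forces the definition of $\tGC$ to include the \emph{random} mean $\Lambda$ rather than a deterministic one.
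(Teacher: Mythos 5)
You should first note that this paper does not prove the statement at all: it is quoted verbatim from the companion paper \cite{DKLP}*{Theorem 2} and used here as a black box, so there is no internal argument to compare against, and what you are really proposing is a reproof of that companion result. As a roadmap your outline does track the decomposition underlying the proof in \cite{DKLP}: duality making the attached bushes $\mathrm{PGW}(\mu)$-trees, a truncated-Poisson kernel degree sequence whose random mean $\Lambda$ absorbs CLT-scale fluctuations, geometric 2-path lengths, and uniformity of the kernel given its degree sequence (which is a theorem of {\L}uczak, not merely ``exchangeability of half-edge labels'').

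There are, however, two genuine gaps. First, your derivation of the $\Geom(1-\mu)$ law is incorrect as stated: with $\lambda=1+\epsilon-\mu\approx 2\epsilon$, the quantity $\P(\Po(\lambda)\geq 3\mid \Po(\lambda)\geq 2)\approx \lambda/3\approx \tfrac23\epsilon$, which is not $1-\mu\approx\epsilon$. The vertex you arrive at while walking along a 2-path is reached via a core edge, so its core degree must be taken size-biased; the relevant termination probability is (roughly) $\P(\Po(\lambda)\geq 2\mid \Po(\lambda)\geq 1)\approx \lambda/2\approx\epsilon$. As written, your computation would give 2-paths of mean length about $3/(2\epsilon)$ rather than $1/\epsilon$, an error that would propagate into every constant in Theorems~\ref{mainthm-1} and~\ref{mainthm-2}. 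Second, and more fundamentally, the entire content of the theorem is the \emph{one-sided contiguity} statement, uniform over all events $\mathcal{A}$ and valid even when $\epsilon^3 n\to\infty$ arbitrarily slowly; your last paragraph correctly identifies this as the main obstacle but offers only the phrase ``Radon--Nikodym comparison with local CLT approximations.'' That is precisely where the work of \cite{DKLP} lies (Poisson-cloning/exploration arguments, the uniformity of the kernel conditioned on its degree data, and quantitative local limit theorems for the joint law of the core and kernel statistics), and it is also where one must explain why only one direction of contiguity is obtained. Without carrying out those estimates the proposal is a heuristic matching of the three steps of $\tGC$, not a proof of the theorem.
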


Indeed, a Taylor-expansion of the above defined parameter $\mu$ shows that $\mu=1-\epsilon + O(\epsilon^2)$, hence our treatment of the re-scaling of the weighted graph by paths of length i.i.d.\ $\Geom(1-\mu)$ will be essentially the same as in the case of i.i.d.\ $\Geom(\epsilon)$ variables, and the same applies to the PGW($\mu$)-trees (rather than PGW($1-\epsilon$)-trees). Step~1, however, is somewhat different here, as the degree distribution of the kernel is richer. Nevertheless, with only minor modifications the original proofs will hold for this case as well. We next address these  adjustments that one needs to make.

\subsection{The 2-core and kernel}

To extend the upper bounds from the case $\epsilon=o(n^{-1/4})$ to the general case, consider the kernel $\K$. While before $\K$ was a random $3$-regular graph, now it has a degree distribution of a truncated Poisson, resulting in an order $\epsilon^k n$ expected number of vertices of degree $k$ for $k\geq 3$. As we will now show, this will only assist us in establishing the upper bound on the diameter.

Consider the weighted graph $G$ studied in Section~\ref{sec:rand-reg}.
A crucial element in the proofs of all the upper bounds was the decay of the distance of two typical vertices in the 2-core.
This was achieved by analyzing the exploration process starting from a uniformly chosen vertex. While previously every vertex had $3$ half-edges, now the degree distribution is more complicated: \begin{itemize}
  \item On one hand, encountering a vertex of large degree in the exploration process would contribute extra half-edges to our boundary, and hence accelerate the exposure speed (by increasing the rate of the exponential waiting times $\tau_i$).
  \item On the other hand, larger degrees might result in a larger tree-excess, slowing down the growth of the number of exposed vertices.
\end{itemize}
We now show that the effect of the second item is negligible. First, notice that w.h.p.\ the largest degree in the kernel is at most $\log N$ (in fact, it is $O(\frac{\log N}{\log\log N})$ w.h.p.). Assuming that this holds, \eqref{eq-tx-B-tau-r} can be replaced by
\begin{align*}
\begin{array}{l}
\P( \tx(B_{\tau_r}) \geq 1 ) \leq \P\big(\Bin\big(r\log N ,(r+2)\frac{\log N}N\big) \geq 1\big) = O\big( r^2 \frac{\log^2 N}N\big) = O\big(\frac{\log^7 N}N\big)\,,\\
\noalign{\medskip}
\P( \tx(B_{\tau_r}) \geq 2 ) \leq \P\big(\Bin\big(r\log N ,(r+2)\frac{\log N}N\big) \geq 2\big) = O\big( r^4 \frac{\log^3 N}{N^2} \big)=o\big( N^{-3/2}\Big)\,,
\end{array}
\end{align*}
which is indeed sufficient. Therefore, the probabilities of the events we condition on to control the tree-excess have the same effect. A straightforward stochastic domination argument for the first item above (where the larger degrees are in our favor) now completes the upper bounds specified in Section~\ref{sec:rand-reg} for the new degree distribution.

Establishing the lower bounds is slightly more delicate. Fix $\delta > 0$. By standard large deviation arguments, there exists some $c = c(\delta) > 1$ such that, with probability at least $1-\delta$,
\begin{equation}\label{eq-N-k-s}
N_3 = \big(\tfrac{4}{3} + o(1)\big) \epsilon^3 n\,, \qquad N_k \leq c \frac{(3\epsilon)^k}{k!} n ~\mbox{ for all }k \geq 4\,.\end{equation}
Set $z = \sqrt{N / \log N}$. Let $S_i$ be the number of unmatched half-edges in our explored set at time $\tau_i$ (where $\tau_i$ denotes the time of the $i$'th step) and let $X_i = S_{i} - S_{i-1}$. Recall that after $i\leq z$ steps we have at least $N_3-z$ vertices of degree $3$ and at most $N_k$ vertices of degree $k$ for $k\geq 4$, and that each matching samples a half-edge uniformly at random. With this in mind, the following variable $Y$ would stochastically dominate the degree of this random half-edge, then
\begin{align*}
\P(Y=3) &=\frac{3 ( N_3-z)}{3(N_3-z) + \sum_{j\geq 4}j N_j}\,,\\
\P(Y=  k) &=\frac{k N_{k}}{3(N_3-z) + \sum_{j\geq 4}j N_j}\quad (k =4,5,\ldots)\,.
\end{align*}
Since each matching to a newly exposed degree-$k$ vertex costs one half-edge and introduces $k-1$ new ones, it follows that $X_i \preceq Y_i -2$ where the $Y_i$'s are i.i.d.\ with $Y_i\sim Y$. Setting $\tilde{S}_t = \sum_{i=1}^t (Y_i-2)$ we get that $S_t \preceq S_0 + \tilde{S}_t $ for all $t$.

By our assumption \eqref{eq-N-k-s}, for every $k\geq 2$
\begin{align*}
  \P(Y_i-2 = k)&\leq \frac{c (k+2)(3\epsilon)^{k+2} n / (k+2)!}{(4-o(1))\epsilon^3 n} \leq c'\frac{(3\epsilon)^{k-1}}{(k+1)!}
\end{align*}
for some constant $c' > 1$.
Hence, the Laplace transform for $Y_i-2$ satisfies
\begin{align*}
\E \mathrm{e}^{\lambda (Y_i-2)} \leq \mathrm{e}^{\lambda} + \sum_{k = 2}^\infty \mathrm{e}^{\lambda k} c' \frac{(3\epsilon)^{k-1}}{(k+1)!} \leq
c' \mathrm{e}^{\lambda} \exp(\mathrm{e}^{\lambda}3\epsilon)\,.
\end{align*}
Setting $\lambda = \log(1/\epsilon)$, we
arrive at $\E \mathrm{e}^{\lambda X_i } \leq
c'\mathrm{e}^{3}/\epsilon$. Now, an application of Markov's
inequality gives that for  large $n$,
\begin{align*}
\P\left(\tilde{S}_t \geq  \Big(1 + \frac{1}{\log \log (1/\epsilon)}\Big)t\right) &\leq \Big(\frac{c'\mathrm{e}^{3}}{\epsilon}\Big)^{t} \exp\left( - \lambda t \Big(1 + \frac{1}{\log \log (1/\epsilon)} \Big)\right) \\
&\leq \left( c' \mathrm{e}^{3}\exp\Big(-\frac{\log(1/\epsilon)}{\log\log(1/\epsilon)}\Big)\right)^t  \leq 2^{-t}\,,
\end{align*}
where the last inequality holds for any large $n$. We can now infer that
\[\sum_{t\geq \sqrt{1/\epsilon}} \P\left(\tilde{S}_t \geq  \Big(1 + \frac{1}{\log \log (1/\epsilon)} \Big)t\right) = O(2^{-1/\sqrt{\epsilon}}) = o(1)\,.\]
On the other hand, as $\P(X_i \geq 2) \leq \P(Y_i \geq 4) = O(\epsilon)$, it follows that
\[ \P\left( X_i = 1 \mbox{ for all $1\leq i \leq 1/\sqrt{\epsilon}$}\right) \geq 1 - O(\sqrt{\epsilon}) = 1-o(1)\,. \]
Altogether, we conclude that given \eqref{eq-N-k-s}, we have that w.h.p.\
\begin{equation}\label{eq-bound-S-t} S_t \leq S_0 + t +
\frac{t}{\log\log(1/\epsilon)} = (1+ o(1)) t + 3\quad\mbox{ for all $k=1,2,\ldots,z$}\,,\end{equation}
where we used the fact that $S_0$ is the degree of the starting vertex for the
exploration and is thus equal to 3 w.h.p.

Letting $\delta \to 0$ (recall its definition above~\eqref{eq-N-k-s}), we obtain that \eqref{eq-bound-S-t} holds w.h.p.
Therefore, we may perform the exploration process according to the argument of Lemma~\ref{lem-typical-dist-lower}, and at every step $t$ condition that the event in \eqref{eq-bound-S-t} indeed holds up to that point, and so the estimate on $\tau_z$ immediately follows in the same manner as before. Finally, recall that the lower bound for the distance between kernel points was obtained from the above ingredients together with the asymptotics of the number of kernel vertices via a simple second moment argument. These asymptotics are the same in our new setting, thus completing the proof of the lower bound for the kernel. An analogous argument gives the corresponding lower bound for the metric graph and the 2-core (here the asymptotic number of edges plays a part in the second moment argument).

Finally, note that, as $\mu = 1 - \epsilon + O(\epsilon^2)$, writing the final estimate involving the term $1/\epsilon$ rather than $1/(1-\mu)$ results in a multiplicative factor of $1+O(\epsilon) = 1 + o(1)$, keeping the statement valid.

\subsection{The giant component} Having extended the treatment of the 2-core and kernel to the case of $\epsilon=o(1)$, it remains to address
 the attached trees. Note that Lemma~\ref{lem-PGW-diameter} applies directly for the value of $\mu$ as defined in Theorem~\ref{thm-struct-gen},
 hence our estimates for $H_u$, the height of the tree attached to a vertex $u$ of the 2-core, remain unchanged. As the rest of the arguments
  are applications of the results for weighted graphs and metric graphs (already discussed in the previous subsection), they hold without modification.

\section*{Acknowledgments}
We wish to thank Asaf Nachmias for helpful discussions at an early stage of this project and an anonymous referee for thorough comments.

\begin{bibdiv}
\begin{biblist}

\bib{ABG}{article}{
    author = {Addario-Berry, L.},
    author = {Broutin, N.},
    author = {Goldschmidt, C.},
    title  = {The continuum limit of critical random graphs},
    date   = {2009},
    note   = {available at \texttt{http://arxiv.org/abs/0903.4730} (34 pages)},
}

\bib{AS}{book}{
  author={Alon, Noga},
  author={Spencer, Joel H.},
  title={The probabilistic method},
  edition={3},
  publisher={John Wiley \& Sons Inc.},
  place={Hoboken, NJ},
  date={2008},
  pages={xviii+352},
}

\bib{BC}{article}{
   author={Bandelt, Hans-J{\"u}rgen},
   author={Chepoi, Victor},
   title={Metric graph theory and geometry: a survey},
   conference={
      title={Surveys on discrete and computational geometry},
   },
   book={
      series={Contemp. Math.},
      volume={453},
      publisher={Amer. Math. Soc.},
      place={Providence, RI},
   },
   date={2008},
   pages={49--86},
}

\bib{BHvdH}{article}{
    author = {Bhamidi, S.},
    author = {Hooghiemstra, G.},
    author = {van der Hofstad, R.},
    title = {First passage percolation on random graphs with finite mean degrees},
    journal = {Annals of Applied Probability},
    status = {to appear},
}

\bib{Bollobas1}{article}{
   author={Bollob{\'a}s, B{\'e}la},
   title={A probabilistic proof of an asymptotic formula for the number of labelled regular graphs},
   journal={European J. Combin.},
   volume={1},
   date={1980},
   number={4},
   pages={311--316},
}

\bib{Bollobas2}{book}{
  author={Bollob{\'a}s, B{\'e}la},
  title={Random graphs},
  series={Cambridge Studies in Advanced Mathematics},
  volume={73},
  edition={2},
  publisher={Cambridge University Press},
  place={Cambridge},
  date={2001},
  pages={xviii+498},
}

\bib{Bollobas84}{article}{
   author={Bollob{\'a}s, B{\'e}la},
   title={The evolution of random graphs},
   journal={Trans. Amer. Math. Soc.},
   volume={286},
   date={1984},
   number={1},
   pages={257--274},
}

\bib{BJR}{article}{
   author={Bollob{\'a}s, B{\'e}la},
   author={Janson, Svante},
   author={Riordan, Oliver},
   title={The phase transition in inhomogeneous random graphs},
   journal={Random Structures Algorithms},
   volume={31},
   date={2007},
   number={1},
   pages={3--122},
}

\bib{CL}{article}{
   author={Chung, Fan},
   author={Lu, Linyuan},
   title={The diameter of sparse random graphs},
   journal={Adv. in Appl. Math.},
   volume={26},
   date={2001},
   number={4},
   pages={257--279},
}

\bib{DKLP}{article}{
    author = {Ding, Jian},
    author = {Kim, Jeong Han},
    author = {Lubetzky, Eyal},
    author = {Peres, Yuval},
    title = {Anatomy of a young giant component in the random graph},
    journal={Random Structures Algorithms},
    status = {to appear},
    note = {Available at \texttt{http://arxiv.org/abs/0906.1839} (42 pages)},
}

\bib{ER59}{article}{
   author={Erd{\H{o}}s, P.},
   author={R{\'e}nyi, A.},
   title={On random graphs. I},
   journal={Publ. Math. Debrecen},
   volume={6},
   date={1959},
   pages={290--297},
}

\bib{FeRa}{article}{
   author={Fernholz, Daniel},
   author={Ramachandran, Vijaya},
   title={The diameter of sparse random graphs},
   journal={Random Structures Algorithms},
   volume={31},
   date={2007},
   number={4},
   pages={482--516},
}

\bib{JLR}{book}{
   author={Janson, Svante},
   author={{\L}uczak, Tomasz},
   author={Rucinski, Andrzej},
   title={Random graphs},
   series={Wiley-Interscience Series in Discrete Mathematics and
   Optimization},
   publisher={Wiley-Interscience, New York},
   date={2000},
   pages={xii+333},
}

\bib{Kesten}{article}{
   author={Kesten, Harry},
   title={Aspects of first passage percolation},
   conference={
      title={\'Ecole d'\'et\'e de probabilit\'es de Saint-Flour, XIV---1984},
   },
   book={
      series={Lecture Notes in Math.},
      volume={1180},
      publisher={Springer},
   },
   date={1986},
   pages={125--264},
}

\bib{Luczak90}{article}{
   author={{\L}uczak, Tomasz},
   title={Component behavior near the critical point of the random graph
   process},
   journal={Random Structures Algorithms},
   volume={1},
   date={1990},
   number={3},
   pages={287--310},
}

\bib{Luczak97}{article}{
   author={{\L}uczak, Tomasz},
   title={Random trees and random graphs},
   booktitle={Proceedings of the Eighth International Conference ``Random
   Structures and Algorithms'' (Poznan, 1997)},
   journal={Random Structures Algorithms},
   volume={13},
   date={1998},
   number={3-4},
   pages={485--500},
}

\bib{LS}{article}{
    author = {{\L}uczak, Tomasz},
    author = {Seierstad, Taral Guldahl},
    title = {The diameter behavior in the random graph process},
    note = {preprint (11 pages)},
}

\bib{Lyons}{article}{
   author={Lyons, Russell},
   title={Random walks, capacity and percolation on trees},
   journal={Ann. Probab.},
   volume={20},
   date={1992},
   number={4},
   pages={2043--2088},
}

\bib{NP}{article}{
   author={Nachmias, Asaf},
   author={Peres, Yuval},
   title={Critical random graphs: diameter and mixing time},
   journal={Ann. Probab.},
   volume={36},
   date={2008},
   number={4},
   pages={1267--1286},
}

\bib{Peres}{article}{
   author={Peres, Yuval},
   title={Probability on trees: an introductory climb},
   conference={
      title={Lectures on probability theory and statistics},
      address={Saint-Flour},
      date={1997},
   },
   book={
      series={Lecture Notes in Math.},
      volume={1717},
      publisher={Springer},
      place={Berlin},
   },
   date={1999},
   pages={193--280},
}


\bib{RW1}{article}{
    author = {Riordan, Oliver},
    author= {Wormald, Nicholas C.},
    title= {The diameter of sparse random graphs},
    note={Preprint, available at \texttt{http://arxiv.org/abs/0808.4067v1} (68 pages)},
    date={2008},
}

\bib{RW2}{article}{
    author = {Riordan, Oliver},
    author= {Wormald, Nicholas C.},
    title= {The diameter of sparse random graphs},
    note={To appear, available at \texttt{http://arxiv.org/abs/0808.4067v2} (86 pages)},
    date={2009},
}

\bib{Wormald99}{article}{
   author={Wormald, N. C.},
   title={Models of random regular graphs},
   conference={
      title={Surveys in combinatorics, 1999 (Canterbury)},
   },
   book={
      series={London Math. Soc. Lecture Note Ser.},
      volume={267},
      publisher={Cambridge Univ. Press},
      place={Cambridge},
   },
   date={1999},
   pages={239--298},
}

\end{biblist}
\end{bibdiv}
\end{document}